\numberwithin{equation}{section}
\newtheorem{theorem}{Theorem}[section]
\newtheorem{proposition}[theorem]{Proposition}
\newtheorem{corollary}[theorem]{Corollary}
\newtheorem{lemma}[theorem]{Lemma}
\theoremstyle{definition}%{plain}
{%\theorembodyfont{\normalfont\rmfamily}
%\newthbeorem{definition}[theorem]{Definition}
\newtheorem{remark}[theorem]{Remark}

\newtheorem{defn}[theorem]{Definition}
}}
\newcommand{\cal}{\mathcal}
\newcommand{\PP}{{\cal P}}
\newcommand{\Rr}{{\mathbb{R}}}
\newcommand{\comment}[1]{}
\begin{document}
\title[Ergodicity of slap maps]{Ergodicity of polygonal slap maps}
\date{\today}

\author[Del Magno]{Gianluigi Del Magno}
\address{CEMAPRE, ISEG\\
Universidade de Lisboa\\
Rua do Quelhas 6, 1200-781 Lisboa, Portugal}
\email{delmagno@iseg.utl.pt}

\author[Lopes Dias]{Jo\~ao Lopes Dias}
\address{Departamento de Matem\'atica and CEMAPRE, ISEG\\
Universidade de Lisboa\\
Rua do Quelhas 6, 1200-781 Lisboa, Portugal}
\email{jldias@iseg.utl.pt}

\author[Duarte]{Pedro Duarte}
\address{Departamento de Matem\'atica and CMAF \\
Faculdade de Ci\^encias\\
Universidade de Lisboa\\
Campo Grande, Edificio C6, Piso 2\\
1749-016 Lisboa, Portugal 
}
\email{pduarte@ptmat.fc.ul.pt}

\author[Gaiv\~ao]{Jos\'e Pedro Gaiv\~ao}
\address{CEMAPRE, ISEG\\
Universidade de Lisboa\\
Rua do Quelhas 6, 1200-781 Lisboa, Portugal}
\email{jpgaivao@iseg.utl.pt}

\begin{abstract}
Polygonal slap maps are piecewise affine expanding maps of the interval obtained by projecting the sides of a polygon along their normals onto the perimeter of the polygon. These maps arise in the study of polygonal billiards with non-specular reflections laws. We study the absolutely continuous invariant probabilities of the slap maps for several polygons, including regular polygons and triangles. We also present a general method for constructing polygons with slap maps having more than one ergodic absolutely continuous invariant probability.
%Polygonal slap maps are piecewise affine expanding maps of the interval originated from billiards in polygonal tables with a special reflection law. We describe their ergodicity properties when the polygon is a triangle or a regular $d$-gon. We also contruct examples of polygons for which the slap map is non-ergodic.
\end{abstract}

\maketitle
%\tableofcontents

%%%%%%%%%%%%%%%%%%%%%%%%%%%%%%%%%%%%%%%%%%%%%%%%%%%%%%%%%%%%%%%%%%%%%%%%%%%%

%\input{Newcommands}

\section{Introduction}
\label{sec:introduction}
%!TEX root = slap_main.tex

Piecewise expanding maps of the interval are one-dimensional dynamical systems with a rich dynamics. The ergodic properties of these maps are well understood. Under proper conditions, they admit finitely many ergodic absolutely continuous invariant probabilities (acip's), and each ergodic component decomposes into a finite number of mixing components cyclically permuted by the map~\cite{BG97}. In this paper we are interested in a class of piecewise expanding maps which we call polygonal slap maps, appearing in the study of polygonal billiards. We focus on the analysis of the acip's of these maps for triangles and regular polygons, determining in particular the exact number of their ergodic and mixing components.

Denote by $\PP_d$ the set of $d$-gons that are not self-intersecting. Consider a polygon $P\in\PP_d$ with perimeter $L$, and let $s\in[0,L]$ be the arc length parameter of $\partial P$. Let $ \rho $ be the ray directed inside $ P $ and orthogonal to $\partial P$ at $s$, and let $ s' $ be the closest point to $ s $ among the points in $ \rho \cap P $ (see Fig.~\ref{fig:1a}). The map $\psi_P $ defined by $ s \mapsto s'$ is a piecewise affine map, called the {\em slap map} of $P$. If $P$ does not have parallel sides, then $\psi_P$ is a piecewise affine expanding map of the interval (see Fig.~\ref{fig:1b}).
%\begin{figure}
%        \centering
%        \begin{subfigure}[t]{0.45\textwidth}
%                \includegraphics[width=\textwidth]{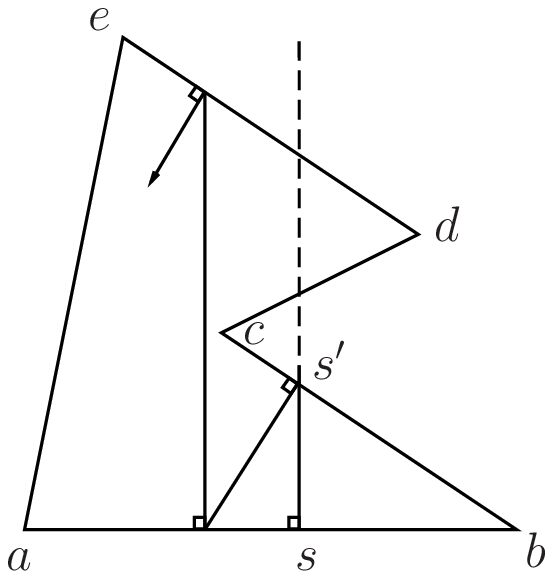}
%                \caption{Slap map $ \psi_{P} $}
%                \label{fig:1a}
%        \end{subfigure}%
%        ~ %add desired spacing between images, e. g. ~, \quad, \qquad etc.
%          %(or a blank line to force the subfigure onto a new line)
%        \begin{subfigure}[t]{0.45\textwidth}
%                \includegraphics[width=\textwidth]{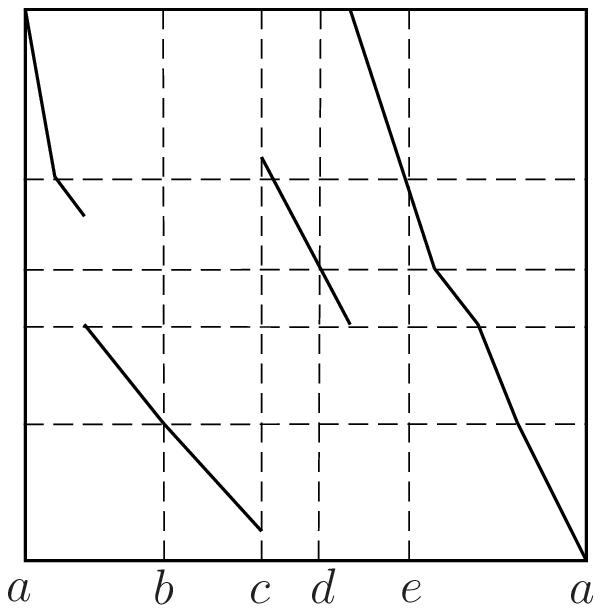}
%                \caption{Graph of $\psi_P$}
%                \label{fig:1b}
%        \end{subfigure}
%\end{figure}

\begin{figure}[t]


\subfloat[]{\label{fig:1a}\includegraphics[scale=.7]{slapA.eps}}
\label{prova}
\hspace{1cm}
\subfloat[]{\label{fig:1b}\includegraphics[scale=.65]{slapB.eps}}
\caption{(A) Slap map $\psi_P$. (B) Graph of $\psi_P$.}
\label{fig:slapmap}
\end{figure}

Slap maps have been introduced in~\cite{markarian10} as a tool to study billiards with a strongly contracting reflection law (see also~\cite{arroyo09}). In fact, slap maps can be thought of as a billiard for which the reflection angle measured with respect to the normal is always zero. In~\cite{MDDGP13}, the study of slap maps was key in establishing the existence of hyperbolic attractors with finitely many Sinai-Ruelle-Bowen measures for polygonal billiards with strongly contracting reflection laws. The aim of this paper is to study the ergodic properties of polygonal slap maps, which will be used in future work to derive ergodic properties of polygonal billiards~\cite{MDDGP13-2}. 

The following are the main results of this paper. The first one deals with regular polygons and answers negatively a question formulated by Markarian, Pujals and Sambarino in~\cite[Section 5.1]{markarian10}.

\begin{theorem} 
\label{main thm:regular}
Let $ P $ be a regular polygon with an odd number $ d $ of sides. Then $ \psi_{P} $ has a unique ergodic acip if $ d=3 $ or $ d=5 $, and exactly $ d $ ergodic acip's if $ d \ge 7 $. Every acip's has $2^{m(d)}$ mixing components for every odd $d\geq 3$, where $m(d)$ is the integer part of $ -\log_2(-\log_2 \cos(\pi/ d)) $. In particular, $m(3)=0$, $m(5)=1$ and $m(7)=2$.
\end{theorem}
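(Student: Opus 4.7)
The plan exploits the $d$-fold rotational symmetry of the regular polygon together with a rigid explicit description of $\psi_P$. Placing side $0$ horizontal, labelling the sides $0,\ldots,d-1$ counterclockwise, and parameterizing each side by signed arc length from its midpoint, a direct geometric check shows that for odd $d$ the only vertex lying in the normal shadow of a given side is the vertex diametrically opposite that side. Hence $\psi_P$ on each side $k$ consists of exactly two affine branches, split at the midpoint, mapping the two halves onto the two sides $k+(d-1)/2$ and $k+(d+1)/2$ modulo $d$. A one-line slope computation, using $|\cos((d\pm1)\pi/d)|=\cos(\pi/d)$, then yields the key rigidity: every one of the $2d$ branches has the same slope $s_*:=1/\cos(\pi/d)$.

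Using this, I would reduce to a quotient map on a single side. Since $\psi_P$ commutes with the rotation $R$ by $2\pi/d$, it descends to a piecewise affine map $\bar\psi_P$ on side $0$ with two branches of slope $s_*$. Upstairs, $\psi_P$ is the $\Zz_d$-extension of $\bar\psi_P$ by the side-index cocycle $\xi(x)\in\{(d+1)/2,(d-1)/2\}$. The number of ergodic acip's of $\psi_P$ equals the index of the essential range of $\xi$ inside $\Zz_d$. For $d=3$ the two branches of $\bar\psi_P$ are full with slope exactly $2$, so $\bar\psi_P$ is (affinely conjugate to) the doubling map and is mixing with a single acip; the cocycle is automatically ergodic, giving a single mixing acip upstairs. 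For $d=5$ I would verify by direct computation that the cocycle is ergodic (essential range $\Zz_5$) but that $\bar\psi_P^2$ has a period-$2$ invariant splitting, yielding one ergodic acip with two mixing components. For $d\geq 7$ the plan is to exhibit a measurable $F:\partial P\to\Zz_d$ with $F\circ\psi_P=F+\xi$, so the cocycle becomes a coboundary and $\psi_P$ splits into the $d$ disjoint level sets $A_0,\ldots,A_{d-1}$ of $F$, cyclically permuted by $R$. Each $A_k$ will be constructed as the forward-invariant hull of a small arc around a specific rotation orbit of side midpoints, and the critical geometric content is to show these hulls remain disjoint for $d\geq 7$ while they merge for $d=3,5$.

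To count the mixing components within a single ergodic acip I would iterate. Because all $2d$ branches of $\psi_P$ have the same slope $s_*$, every branch of $\psi_P^n$ has slope $s_*^n$. The same mechanism that created the period-$2$ coboundary at $d=5$ then gives a cascade of period-$2$ coboundaries for $\psi_P^{2^j}$ as long as $s_*^{2^j}<2$: at each such level the refined Markov structure admits a new $\Zz_2$-valued invariant, and one bit of mixing is lost. This process stops precisely when $s_*^{2^j}\geq 2$, which by definition of $m(d)$ first occurs at $j=m(d)+1$; at that level a Lasota--Yorke estimate applied to $\psi_P^{2^{m(d)+1}}$ yields mixing on each cell. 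Hence each ergodic component splits into exactly $2^{m(d)}$ mixing components, and the values $m(3)=0,m(5)=1,m(7)=2$ follow by direct evaluation.

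The principal obstacle is the construction in the second paragraph: building the measurable $F$ (equivalently the invariant sets $A_k$) for $d\geq 7$ and pinning down the combinatorial transition at $d=7$. This requires tracking the forward orbits of specific arcs through several iterates of $\psi_P$ and verifying disjointness, where the precise threshold emerges from the interplay between the side-index shifts $(d\pm1)/2\pmod d$ and the geometric narrowness of the affine branches.
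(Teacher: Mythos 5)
Your setup coincides with the paper's: both quotient by the rotational symmetry to a two-branch piecewise affine map on a single side, view $\psi_P$ as a $\mathbb{Z}_d$-extension of it by the side-index cocycle, and tie the number of mixing components to a period-doubling renormalization that stops once the expansion of an iterate exceeds $2$. But the two steps you flag as ``to be verified'' are exactly where the content of the theorem lies, and your plan for them is not a proof. For $d\ge 7$ you propose to build the transfer function $F$ by tracking forward-invariant hulls of arcs and checking disjointness, and you admit you cannot yet explain why the threshold sits at $d=7$. The paper's mechanism is the missing idea: compose the reduced map $\phi_d$ with the involution $\varphi(x)=1-x$ to get a centrally symmetric Lorenz map of slope $1/\cos(\pi/d)$; by Parry's theorem it is $m(d)$-times renormalizable on nested centrally symmetric intervals $J_0\supset J_1\supset\cdots\supset J_m$, and the fiber displacement accumulated over $2^k$ steps, restricted to $J_k$, equals $a_k\,\delta(x)$ with $\delta=\pm1$, where the recursion $\alpha_{2^{k+1}}=\alpha_{2^k}+\alpha_{2^k}\circ\phi_d^{2^k}$ combined with the antisymmetry $\alpha_{2^k}\circ\varphi=-\alpha_{2^k}$ forces $a_0=[d/2]$, $a_1=-1$ and $a_k=0$ for all $k\ge 2$. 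The fiber motion therefore dies after exactly two renormalizations, and $d\ge 7$ is precisely the condition $m(d)\ge 2$; the $d$ invariant sets $J_m\times\{s\}$ then come for free, with no orbit tracking and no separate disjointness argument.

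For $d=5$ (where $m=1$) your ``verify by direct computation that the cocycle is ergodic'' conceals the hardest argument in the proof: the base map is not Markov here, so one must show that every mixing component of every acip contains all of $J_1\times\mathbb{Z}_5$. The paper does this by locating a period-two orbit of $\varphi_5^2$ inside a subinterval $B$ on which $\alpha_4\equiv 0$, growing local unstable intervals under $\phi_5^4$ until they cover $B$ in a fixed fiber, and then using $\alpha_2=-\delta$ to propagate to all five fibers; nothing in your outline substitutes for this. Finally, your cascade criterion for losing a bit of mixing is off by one: the Lorenz map $\psi_P^{2^j}$ on its renormalization interval has slope $s_*^{2^j}$ and admits a further period-two splitting iff that slope is at most $\sqrt2$, i.e.\ iff $s_*^{2^{j+1}}\le 2$, not iff $s_*^{2^j}<2$. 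With your threshold the pentagon would acquire $4$ rather than $2$ mixing components.
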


Our second main result concerns slap maps of general triangles.

\begin{theorem}
\label{main thm:triangles}
The slap map $\psi_P$ of a triangle $P$ has a unique ergodic acip. If $ P $ is acute, then the acip is mixing and is supported on the whole interval. Otherwise, the acip has an even number of mixing components.
\end{theorem}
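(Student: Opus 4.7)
The plan is to construct a finite Markov partition for $\psi_P$ in the acute and right-angled cases, and to rely on a topological-transitivity argument in the obtuse case, then appeal to the general theory of piecewise expanding maps as in \cite{BG97}. The key geometric input is that on each side $\sigma$ of $P$, the slap map $\psi_P$ is affine except at those points where the inward perpendicular ray passes through a vertex of $P$ --- exactly the feet of the altitudes from vertices not on $\sigma$. For a triangle, this altitude foot lies in the relative interior of $\sigma$ if and only if the two angles adjacent to $\sigma$ are both acute. Hence in the acute case each side splits into two pieces, giving a partition into six sub-intervals, whereas in the non-acute case the two sides adjacent to the non-acute vertex $V$ each form a single piece and only the opposite side $\sigma_V$ is split in two (by the altitude foot from $V$).

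For the acute case, I would verify that the six-piece partition is Markov by the following geometric observation: on a piece of side $\sigma$ bounded by a vertex $U$ of $\sigma$ and by an altitude foot $F$, the slap map sends $U$ to $U$ (viewed on the adjacent side) and $F$ to the apex of the dropped altitude, so by continuity and affinity the image is the entire opposite side. I would then write down the resulting $6\times 6$ transition matrix, verify strong connectedness, and exhibit cycles of lengths $2$ and $3$ to obtain aperiodicity. Together with the standard theory for piecewise expanding Markov maps from \cite{BG97}, this gives a unique ergodic acip, which is mixing and supported on the whole interval.

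For the non-acute case, the two sides adjacent to $V$ map entirely into $\sigma_V$, while the two pieces of $\sigma_V$ map respectively onto each of the two adjacent sides. Defining the parity function $\pi:[0,L]\to\{0,1\}$ equal to $1$ on $\sigma_V$ and $0$ elsewhere, one checks that $\psi_P$ swaps the two parity classes. Consequently, any ergodic acip has its mixing decomposition compatible with this invariant bipartition, forcing an even number of mixing components.

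The hardest step will be uniqueness of the ergodic acip when $P$ is obtuse. For a right triangle, the natural four-piece partition is in fact Markov (each adjacent side maps affinely onto all of $\sigma_V$) with a strongly connected transition graph, and uniqueness follows immediately. In the obtuse case, however, the images of the two adjacent sides inside $\sigma_V$ overlap but do not align with the altitude-foot split of $\sigma_V$, and a direct refinement into a finite Markov partition generally does not close up. I plan to circumvent this by establishing topological transitivity of $\psi_P$ (up to the parity from the previous paragraph) via a direct expansion argument: uniform expansion --- with slope $1/|\cos\theta_U|$ on each piece, $\theta_U$ an interior angle --- forces any nonempty open interval, after finitely many iterates, to cover a full piece of the original four-piece partition, from which its further iterates sweep out the entire perimeter modulo parity. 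Combined with \cite{BG97}, which ensures finitely many ergodic acips, this transitivity forces uniqueness, and the parity argument then yields the even mixing-component count.
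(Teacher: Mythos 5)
Your treatment of the acute case (six half-sides, each mapping affinely onto a full side, with an irreducible and aperiodic transition matrix) and of the right triangle is correct and is essentially the paper's argument for case (a) (cf.\ the proof of Theorem~\ref{th:P3} and the remark after Theorem~\ref{th:triangle}); likewise your parity bipartition of $\sigma_V$ against the two adjacent sides correctly forces the number of mixing components to be even.

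The gap is in the obtuse case, exactly at the step you flag as hardest. The assertion that ``uniform expansion forces any nonempty open interval, after finitely many iterates, to cover a full piece of the original four-piece partition'' does not follow from uniform expansion alone, and it is precisely the kind of statement this paper refutes for other polygons: the slap map of the regular heptagon is uniformly expanding yet has seven ergodic acip's, each supported on a small union of intervals (Theorem~\ref{th:Pd7}), and the hexagon of Section~5 behaves similarly, so no polygon-independent ``expansion implies covering/transitivity'' principle is available. Concretely, the expansion factor of a branch is $1/\cos\theta$ with $\theta$ an acute angle of the triangle, and this tends to $1$ as the triangle flattens; once the factor drops below $2$, an interval straddling a discontinuity splits into two pieces whose longer half may be \emph{shorter} than the original interval, so lengths need not grow and the sweeping-out step stalls. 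You would need a genuinely triangle-specific argument here. The paper sidesteps transitivity entirely: the longest side $I_0$ is forward invariant under $\psi_\Delta^2$, and $\psi_\Delta^2|_{I_0}$ is a piecewise expanding map with a \emph{single} discontinuity (the foot of the altitude from the non-acute vertex), so the bound $j\le m$ in Theorem~\ref{th:acip} immediately gives it a unique ergodic acip $\nu_0$; any acip of $\psi_\Delta$ must then restrict to $\nu_0$ on $I_0$ and to its pushforward on $I_1\cup I_2$, which determines it uniquely, and a short invariant-set argument upgrades this to ergodicity. Replacing your transitivity claim by this ``count the branches of the second iterate on the invariant long side'' argument is the most economical repair of your outline.
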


From the previous theorems, it emerges that the uniqueness of the acip is not a typical property among polygonal slap maps. To further explore this fact, we present a procedure for constructing polygons with a number of sides greater than three whose slap map has several ergodic acip's. In particular, we show that for any $n\geq 2$ there exists a convex $3n$-gon whose slap map has $n$ ergodic acip's with even mixing period. Moreover, we study a bifurcation of certain periodic orbits yielding a mechanism for the creation of ergodic components. We apply it to show that there exist quadrilaterals with a mirror symmetry that have at least two acip's. 

%Furthermore, we introduce a family of quadrilaterals with a mirror symmetry that have at least two ergodic components.

The paper is organized as follows. In section~\ref{sec:expanding maps} we review several results on piecewise expanding maps of the interval concerning the existence of acip's and their spectral decomposition. Since slap maps of regular polygons are extensions of certain Lorenz maps, in section~\ref{sec:expanding maps}, we also briefly review the main results concerning the renormalization of Lorenz maps. The proof of Theorems~\ref{main thm:regular} and ~\ref{main thm:triangles} is contained in sections~\ref{sec:regular polygons} and ~\ref{sec:triangles}, respectively. Finally, in section~\ref{sec:non ergodic}, we describe in detail a method to construct polygons with more than one ergodic acip's.

\section{Piecewise Expanding Interval Maps} 
\label{sec:expanding maps}
%!TEX root = slap_main.tex

%In this section we summarize some known results on piecewise expanding maps of the interval.

Let $I$ be a closed interval. A map  $f \colon I\to I$ is called {\it piecewise expanding} if there exist a constant $\sigma>1$ and closed intervals $I_0,\ldots, I_m$  such that  
\begin{enumerate}
\item $I=\bigcup_{i=0}^m I_i$ and ${\rm int}(I_i)\cap {\rm int}(I_j)=\emptyset$ for $i\neq j$,
%\item $f\vert_{{\rm int}(I_i)}$ is continuous, and extends to a $C^2$ map  $\overline{f}_i \colon \overline{I_i}\to I$,
\item $f\vert_{{\rm int}(I_i)}$ is $ C^{1} $, and $ | f'|_{{\rm int}(I_i)}| \ge \sigma$ for each $ i $,
\item $ 1/|f'| $ is a function of bounded variation on each $ {\rm int}(I_i) $.
\end{enumerate}

Throughout the paper, we will use the standard abbreviation \emph{acip} for an invariant probability measure of $f$ that is absolutely continuous with respect to the Lebesgue measure of $ I $. 

%%%%%%%%%%%%%%%%%%
\subsection{Spectral decomposition}

%Let $f\colon I\to I$ be a piecewise expanding map. 
In the following theorem, we summarize well known results on ergodic properties of piecewise expanding maps (for example, see~\cite[Theorems 7.2.1, 8.2.1 and 8.2.2]{BG97}).

\begin{theorem}\label{th:acip} \label{th:folk}
Let $f$ be a piecewise expanding map. Then 
%with $m$ points of discontinuity. 
\begin{enumerate}
\item $ f $ has exactly $ j $ ergodic acip's $ \nu_{1},\ldots,\nu_{j} $ for some $ 1 \le j \le m $,
\item the density of each $ \nu_{i} $ has bounded variation, and its support $ \Lambda_{i} $ consists of finitely many intervals,
\item for each $ i $, there exist an integer $ k_{i} \ge 1 $ and disjoint measurable sets such that $\Lambda_i=\Sigma_{i,1} \cup \cdots \cup \Sigma_{i,k_i}$, the sets $ \Sigma_{i,1},\ldots,\Sigma_{i,k_i} $ are cyclically permuted by $ f $, 
%$f(\Sigma_{i,j})=\Sigma_{i,j+1}$ for $j=1,\ldots, k_i-1$, $f(\Sigma_{i,k_i})=\Sigma_{i,1}$,
and $ (f^{k_i}|_{\Sigma_{i,1}},\nu_{i}|_{\Sigma_{i,1}}) $ is exact.
\end{enumerate}
\end{theorem}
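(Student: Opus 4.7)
The plan is to work with the Perron--Frobenius (transfer) operator $\LL\colon L^{1}(I)\to L^{1}(I)$ associated with $f$, defined by the duality $\int (\LL\varphi)\,\psi\,d\Leb = \int \varphi\,(\psi\circ f)\,d\Leb$. Fixed points of $\LL$ in $L^{1}$ are precisely densities of acip's, so the theorem reduces to a spectral analysis of $\LL$ on the Banach space $BV(I)$ of functions of bounded variation equipped with the norm $\|\varphi\|_{BV}=\Var(\varphi)+\|\varphi\|_{1}$.

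The first and most important step is the \emph{Lasota--Yorke inequality}: there exist an integer $n\geq 1$, a constant $\alpha<1$ and a constant $C>0$ such that $\Var(\LL^{n}\varphi)\leq \alpha\,\Var(\varphi)+C\,\|\varphi\|_{1}$ for all $\varphi\in BV(I)$. Using the explicit formula for $\LL$ as a sum over inverse branches of $f$, the uniform expansion $|f'|\geq \sigma>1$, and the bounded variation of $1/|f'|$ on each $\Int(I_{i})$, one obtains variation estimates whose contractive factor is essentially $\sigma^{-n}$; choosing $n$ large enough that $\sigma^{-n}$ dominates the boundary contributions from the finite partition yields $\alpha<1$. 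Combined with the compact embedding $BV(I)\hookrightarrow L^{1}(I)$ (Helly's selection theorem), Hennion's theorem (or the Ionescu-Tulcea--Marinescu theorem) then shows that $\LL$ is \emph{quasi-compact} on $BV(I)$: its essential spectral radius is strictly less than $1$, and the peripheral spectrum consists of finitely many eigenvalues with finite-dimensional eigenspaces.

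Quasi-compactness yields items (1) and (2) directly. The $1$-eigenspace of $\LL$ is finite-dimensional and contains all invariant densities; the ergodic acip's are the normalisations of the extreme points of its intersection with the positive cone, hence there are only finitely many, say $\nu_{1},\ldots,\nu_{j}$. Each $\nu_{i}$ has $BV$ density because it lies in a finite-dimensional spectral projection inside $BV(I)$, and a nonnegative $BV$ function is automatically supported on a finite union of intervals. The bound $j\leq m$ follows because each ergodic support $\Lambda_{i}$ must contain at least one monotonicity piece $I_{k}$: indeed, $f$ restricted to $\Lambda_{i}$ is locally expanding, so the forward orbit of any subinterval of $\Lambda_{i}$ must cover some $I_{k}$ after finitely many steps, and distinct ergodic supports cannot share such an $I_{k}$.

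For item (3), fix $i$ and restrict $\LL$ to the spectral subspace of densities supported on $\Lambda_{i}$. A classical Hofbauer--Keller type argument shows that on such an irreducible piece the peripheral spectrum is a finite cyclic group $\{\zeta^{\ell}:0\leq \ell<k_{i}\}$ for some primitive $k_{i}$-th root of unity $\zeta$, with eigenvector of the form $\sum_{s=1}^{k_{i}}\zeta^{s}\,\mathbf{1}_{\Sigma_{i,s}}\,\rho_{i}$, where $\rho_{i}$ is the density of $\nu_{i}$ and the sets $\Sigma_{i,s}$ are pairwise disjoint and satisfy $f(\Sigma_{i,s})=\Sigma_{i,s+1}$ modulo $k_{i}$. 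Passing to the iterate $f^{k_{i}}|_{\Sigma_{i,1}}$ eliminates all non-trivial peripheral eigenvalues and leaves $1$ as a simple isolated eigenvalue, which by Lin's theorem is equivalent to exactness of $(f^{k_{i}}|_{\Sigma_{i,1}},\nu_{i}|_{\Sigma_{i,1}})$. The main technical obstacle throughout is the sharp Lasota--Yorke estimate; once it is in place, the remainder of the decomposition is essentially functional-analytic boilerplate.
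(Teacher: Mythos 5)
The paper does not prove this statement at all: it is quoted as a summary of known results, with a pointer to \cite[Theorems 7.2.1, 8.2.1 and 8.2.2]{BG97}. Your outline is essentially the proof that lives in that reference (Lasota--Yorke inequality, quasi-compactness of the transfer operator on $BV$, spectral decomposition of the peripheral spectrum, Lin's criterion for exactness), so as an overall architecture it is the standard and correct route; there is nothing to compare it against inside the paper itself.

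That said, two of your sub-arguments are wrong as stated. First, a nonnegative $BV$ function is \emph{not} automatically supported on finitely many intervals (take a function equal to $2^{-n}$ on a sequence of intervals accumulating at a point). The finiteness of the components of $\Lambda_i$ requires the additional fact that the invariant density is bounded away from zero on its support, which is a consequence of invariance (roughly, $\LL\rho=\rho$ forces the essential infimum over the support to be positive); only then does the variation bound limit the number of components to $\Var(\rho_i)/(2\,\mathrm{ess\,inf}\,\rho_i)$. Second, your justification of $j\le m$ --- that the forward orbit of any subinterval of $\Lambda_i$ must cover some monotonicity piece $I_k$ --- is false: Theorem~\ref{th:Pd7} of this very paper exhibits piecewise expanding maps whose ergodic supports are unions of short intervals containing no full branch domain. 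The correct counting argument is that a subinterval of $\Lambda_i$ grows geometrically under iteration as long as its image avoids the interior partition points, hence must eventually contain one of the $m$ interior endpoints of $I_0,\dots,I_m$ in its interior; since distinct ergodic supports are disjoint mod $0$, each ergodic component captures a distinct partition point, giving $j\le m$. With these two repairs the proposal is a faithful reconstruction of the cited proof.
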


%The following theorem is a consequence of Ionescu-Tulcea and Marinescu's Theorem~\cite{ITM50}.

%The stable set of a forward invariant set $\Lambda=f(\Lambda)$ is defined to be
%$$
%W^s(\Lambda)=\bigcup_{n\geq 0} f^{-n}(\Lambda).
%$$
%
%\begin{theorem} %[Folklore Theorem]
%\label{th:folk}
%Let $ f $ be a piecewise expanding map. Then there exist forward invariant subsets $\Lambda_1, \ldots , \Lambda_k\subset I$ such that
%\begin{enumerate}
%\item $I\setminus W^s(\Lambda_1)\cup \ldots \cup W^s(\Lambda_k)$
%has zero Lebesgue measure;
%\item ${\rm int}(\Lambda_i)\cap {\rm int}(\Lambda_j)=\emptyset$ for $i\neq j$;
%\item $\Lambda_i$ is the support of an ergodic  acip of $f$;
%\item $\Lambda_i$ is transitive;
%\item $\Lambda_i=\Sigma_{i,1}\cup\ldots \cup\Sigma_{i,k_i}$,
%where $f(\Sigma_{i,j})=\Sigma_{i,j+1}$ for $j=1,\ldots, k_i-1$, $f(\Sigma_{i,k_i})=\Sigma_{i,1}$, and $f^ {k_i}:\Sigma_{i,1}\to\Sigma_{i,1}$ is topologically mixing;
%\item the periodic points of $f$ are dense in $\Lambda_i$.
%\end{enumerate}
%\end{theorem}

%Theorem~\ref{th:folk} (see~\cite[Theorem 7.2.1]{BG97} and \cite[Theorem 3.1]{Viana} for proofs) tells us that each \textit{ergodic component} $\Lambda_i$ can be further decomposed into a union of finitely many \textit{mixing components} $\Sigma_{i,j}$ which are cyclically permuted by $f$. The length $k_i$ of the cycle is known as \textit{mixing period}.
The sets $ \Sigma_{i,j} $ and the constant $ k_{i} $ are called the \emph{mixing components} of $ f $, and the \emph{mixing period} of the cycle $ \Sigma_{i,1},\ldots,\Sigma_{i,k_i} $, respectively. 

%\begin{defn}
%We say that $f$ has \textit{separated mixing components} if $\overline{\Sigma}_{i,j}\cap\overline{\Sigma}_{i',j'}=\emptyset$ whenever $i\neq i'$ or $j\neq j'$. 
%\end{defn}
%%%%%%%%%%%%%%%%%%%%%%%%%

%%%%%%%%%%%%%%%%%%%%%%%%%
\subsection{Lorenz maps}

\begin{defn}
A map $f \colon[0,1]\to[0,1]$ is called a {\it Lorenz map} if $f$ has a unique discontinuity point $0<c<1$ such that $f(c^-)=1$, $f(c^+)=0$, and $f$ is monotonic increasing in both intervals $[0,c)$ and $(c,1]$.
\end{defn}

% \begin{center}
% \includegraphics[width=7cm]{Images/LorenzMap.png}
% \end{center}

A Lorenz map $f \colon[0,1]\to[0,1]$ is {\it renormalizable} if there exist $\ell, r> 1$ and a proper subinterval $[a,b]\subset [0,1]$ such that
the transformation $g \colon[a,b]\to [a,b]$ defined by
$$ g(x)=\left\{\begin{array}{lcr}
f^\ell(x) & \text{ if } & x\in [a,c) \\
f^r(x) & \text{ if } & x\in (c,b] \\
\end{array} \right. $$
is also a Lorenz map. We call $[a,b]$ a \textit{renormalization interval}. If $f$ is not renormalizable it is said to be {\it prime}. 

A renormalization $g=(f^\ell,f^r)$ of $f$ is {\it minimal} if for any other renormalization $g'=(f^{\ell'},f^{r'})$ of $f$ we have $\ell'\geq \ell$ and $r'\geq r$. For every renormalizable Lorenz map $f$ we define $\mathscr{R}f$ to be its minimal renormalization. Finally, we say that a Lorenz map $f$ is {\it $m$-times renormalizable} with $0\leq m \leq \infty$ if $\mathscr{R}^kf$ is renormalizable for $0\leq k< m$ and $\mathscr{R}^mf$ is prime.

% \begin{center}
% \includegraphics[width=7cm]{Images/renorm.png}
% \end{center}

Let $f \colon[0,1]\to[0,1]$  be a piecewise expanding Lorenz map.

\begin{theorem}[Glendinning and Sparrow~\cite{GS93}]\label{th:GS}
If $f$ is prime then it admits a unique mixing acip with support equal to the interval $[0,1]$.
\end{theorem}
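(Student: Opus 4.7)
The plan is to prove the contrapositive: if $f$ has more than one ergodic acip, or has an ergodic acip with mixing period $\geq 2$, or has an ergodic acip whose support is a proper subset of $[0,1]$, then $f$ is renormalizable. I take the spectral decomposition of Theorem~\ref{th:acip} as input, supplying ergodic acips $\nu_1,\ldots,\nu_j$ with supports $\Lambda_i$ that are finite unions of closed intervals and mixing periods $k_i\geq 1$; the goal is to exhibit, in each failure case, a renormalization interval $[a,b]\ni c$ together with integers $\ell,r\geq 1$ such that $(f^{\ell},f^{r})$ defines a Lorenz map on $[a,b]$.

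The backbone would be a structural claim: under any of the three failure hypotheses, the support of some acip (possibly after passing to $f^{k_i}$ restricted to a single mixing component) has exactly one connected component containing $c$ in its interior, and that component is a proper sub-interval $[a,b]\subsetneq[0,1]$. I would establish this by exploiting the rigidity of the Lorenz structure: since $f$ is monotone increasing on each of $[0,c)$ and $(c,1]$ with $f(c^-)=1$ and $f(c^+)=0$, for any $a'<c<b'$ one has $f([a',c))=[f(a'),1)$ and $f((c,b'])=(0,f(b')]$, so the image of any interval abutting $c$ reaches the endpoint $0$ or $1$. Forward invariance of $\Lambda$, combined with the requirement that gaps of $\Lambda$ in $[0,c)$ map into gaps and similarly on $(c,1]$, tightly constrains the combinatorial arrangement of the components of $\Lambda$; a case analysis then forces the component straddling $c$ to absorb all the others.

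Given such $[a,b]$, the renormalization is built explicitly. Let $\ell\geq 1$ be the smallest integer such that $f^{\ell-1}(1)\in[a,b]$ and $r\geq 1$ the smallest integer such that $f^{r-1}(0)\in[a,b]$; equivalently, $\ell$ and $r$ are the first-return times of $c^-$ and $c^+$ to $[a,b]$. Finiteness follows from the structural claim together with forward invariance, and monotonicity plus the choice of $[a,b]$ as the connected component around $c$ yield $f^{\ell}(c^-)=b$ and $f^{r}(c^+)=a$. Setting $g=f^{\ell}$ on $[a,c)$ and $g=f^{r}$ on $(c,b]$ then produces a well-defined map $g\colon[a,b]\to[a,b]$, monotone increasing on each half with $g(c^-)=b$ and $g(c^+)=a$; hence $g$ is a Lorenz map, i.e.\ $f$ is renormalizable, contradicting primality.

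The hard part will be the structural claim: showing that the invariant piece surrounding $c$ really is a single interval and that the first-return times on the two sides of $c$ produce the \emph{same} terminal interval $[a,b]$, rather than distinct ones. This requires a careful combinatorial analysis of how the two monotone branches of $f$ act on the complement of $\Lambda$ (or of a mixing component), and in the mixing-period case one has to verify that the relevant sub-interval is preserved by $f^{k_i}$ rather than $f$. Once the structural claim is in place, verifying that $g$ inherits piecewise expansion from $f$ (with rate at least $\sigma^{\min(\ell,r)}$) and that its reciprocal derivative has bounded variation are routine.
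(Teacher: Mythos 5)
The paper itself offers no proof of this statement: it is imported verbatim from Glendinning and Sparrow \cite{GS93}, whose argument runs through kneading theory (prime kneading invariants are characterized as those of locally eventually onto maps, and local eventual ontoness plus piecewise expansion gives a unique, mixing, fully supported acip). So there is no in-paper argument to compare against, and your proposal must stand on its own. As written it does not: the entire content of the theorem is concentrated in your ``structural claim'', which you assert, describe as ``the hard part'', and never establish. Worse, the claim is not correct as formulated in at least one of your three failure cases. If $f$ had two ergodic acips, their supports $\Lambda_1,\Lambda_2$ would be essentially disjoint finite unions of intervals, and the very expansion argument you invoke (iterates of any component grow until they meet the discontinuity) forces \emph{both} supports to accumulate at $c$; hence $c$ sits on the common boundary of a component of $\Lambda_1$ and a component of $\Lambda_2$, and \emph{neither} support has a component containing $c$ in its interior. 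The renormalization interval in that situation must be assembled from one component of each support, one ending at $c$ from the left and one beginning at $c$ from the right --- it is not obtained by letting ``the component straddling $c$ absorb all the others''. Either you rule this case out (which is itself a nontrivial piece of the theorem) or you need a different construction for it.

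Even in the cases where a single component $[a,b]\ni c$ of a support (or of a mixing component) does exist, the step you wave through with ``monotonicity plus the choice of $[a,b]$'' --- that the first returns of the critical orbits hit the endpoints exactly, $f^{\ell}(c^-)=f^{\ell-1}(1)=b$ and $f^{r}(c^+)=f^{r-1}(0)=a$, rather than landing in the interior of $[a,b]$ or overshooting into another component --- is precisely where the proof lives; it amounts to identifying the endpoints of the components of the support with first returns of the orbits of the critical values, which requires an argument (Hofbauer tower or a direct combinatorial analysis of the gaps), not an appeal to monotonicity. You must also reconcile your construction with the paper's definition of renormalizability, which demands $\ell,r>1$: your first-return times are only guaranteed to satisfy $\ell,r\ge 1$, and the degenerate cases $b=1$ (giving $\ell=1$) and $a=0$ (giving $r=1$) have to be excluded or treated separately, or the contrapositive does not produce a renormalization in the sense used here. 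The overall strategy --- extract a renormalization interval from the spectral decomposition --- is a legitimate alternative to the kneading-theoretic route of \cite{GS93}, but as submitted the proposal is an outline with its central lemma missing.
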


Consider now the family of piecewise affine Lorenz maps $f_a\colon[0,1]\to[0,1]$ with $1<a\leq2$ given by
$$
f_a(x)=a(x-1/2)\pmod{1}.
$$
We call $f_a$ a {\it centrally symmetric} map.
Let $m=m(a)$ be the unique nonnegative integer such that 
$$
2^{2^{-m-1}}<a\leq2^{2^{-m}}.
$$
A centrally symmetric interval is a subinterval of $[0,1]$ centered at $1/2$. From the results of W. Parry~\cite{parry79} we get

\begin{theorem}
\label{th:lorenz}
 $f_a$ is $m$-times renormalizable with $\mathscr{R}^k f_a=f_a^{2^k}|_{J_k}$ for $k=0,1,\ldots,m$, where the renormalization intervals $J_k$ form a nested sequence of centrally symmetric intervals,
$$
1/2\in J_m\subset J_{m-1}\subset\cdots\subset J_1\subset J_0=[0,1].
$$
\end{theorem}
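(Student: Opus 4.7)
The plan is to carry out one step of renormalization explicitly and then iterate, observing that $\mathscr{R}$ acts on the one-parameter family $\{f_a\}$ by the squaring map $a\mapsto a^{2}$. First, I would record the central symmetry: writing $\tau(x)=1-x$, a direct check gives $f_a\circ\tau=\tau\circ f_a$ (as maps $\bmod 1$), so any natural renormalization interval must be centrally symmetric about $1/2$.

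Next I would pin down the first renormalization interval. Since $f_a(1/2^-)=1$, $f_a(1/2^+)=0$, $f_a(1)=a/2$ and $f_a(0)=1-a/2$, one computes $f_a^2(1/2^-)=a/2$ and $f_a^2(1/2^+)=1-a/2$, which forces the only candidate to be $J_1=[1-a/2,\,a/2]$. On its left branch $f_a^2(x)=a^2(x-1/2)+a/2$, so the inclusion $f_a^2(J_1)\subseteq J_1$ reduces to $f_a^2(1-a/2)\ge 1-a/2$, i.e.\ $\tfrac{a}{2}(1+a-a^2)\ge 1-\tfrac{a}{2}$, which simplifies to $(a-1)(a^2-2)\le 0$, equivalent to $a\le 2^{2^{-1}}$. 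In this range $f_a^2|_{J_1}$ is a Lorenz map on $J_1$ with $\ell=r=2$; since the definition of renormalization forces $\ell,r\ge 2$, this is automatically minimal and $\mathscr{R}f_a=f_a^{2}|_{J_1}$.

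The key remaining step is to identify $\mathscr{R}f_a$ with $f_{a^2}$ up to affine rescaling. Let $\phi:J_1\to[0,1]$ be the orientation-preserving affine bijection $\phi(x)=(x-(1-a/2))/(a-1)$, which sends $1/2$ to $1/2$. A short substitution shows that on $[0,1/2)$,
$$\phi\circ f_a^{2}\circ\phi^{-1}(y)=a^2(y-1/2)+1=f_{a^2}(y),$$
and by central symmetry the same identity holds on $(1/2,1]$ (the condition $a^2\le 2$ ensures the $\bmod 1$ reductions agree). Hence $\mathscr{R}$ sends $f_a$ to $f_{a^2}$ up to affine conjugacy, and an easy induction yields $\mathscr{R}^{k}f_a=f_a^{2^k}|_{J_k}$ on a nested sequence $J_k\subset J_{k-1}$ of centrally symmetric intervals containing $1/2$. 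Applying the criterion of the previous paragraph at the $k$-th stage, $\mathscr{R}^{k}f_a$ is renormalizable iff $a^{2^k}\le 2^{2^{-1}}$, i.e.\ $a\le 2^{2^{-(k+1)}}$; hence $f_a$ is exactly $m$-times renormalizable when $2^{2^{-(m+1)}}<a\le 2^{2^{-m}}$, which is precisely the definition of $m(a)$.

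The only delicate point I anticipate is the bookkeeping of the $\bmod 1$ reductions when verifying both $f_a^2(J_1)\subseteq J_1$ and the conjugacy $\phi\circ f_a^2\circ\phi^{-1}=f_{a^2}$; once these two algebraic identities are secured, the induction, the nesting $J_m\subset\cdots\subset J_1\subset J_0=[0,1]$, and the central symmetry of each $J_k$ propagate automatically.
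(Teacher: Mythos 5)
Your approach is necessarily different from the paper's, since the paper does not prove this theorem at all: it is quoted as a consequence of Parry's results in \cite{parry79}. A direct verification for this explicit affine family is entirely reasonable, and most of your computation is correct: the endpoints of a $(2,2)$-renormalization interval are forced to be $f_a^2(1/2^{\pm})=a/2$ and $1-a/2$, the invariance of $J_1=[1-a/2,a/2]$ reduces to $(a-1)(a^2-2)\le 0$, the affine conjugacy $\phi\circ f_a^2\circ\phi^{-1}=f_{a^2}$ checks out, and since $\ell=r=2$ is the least admissible type, this renormalization is automatically minimal whenever it exists. Iterating gives the nested centrally symmetric intervals $J_k$ and $\mathscr{R}^k f_a=f_a^{2^k}|_{J_k}$ for as long as the renormalizations exist.

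The gap is in the sentence ``$\mathscr{R}^{k}f_a$ is renormalizable iff $a^{2^k}\le 2^{2^{-1}}$''. Being exactly $m$-times renormalizable requires $\mathscr{R}^m f_a$ to be \emph{prime}, i.e.\ to admit no renormalization of \emph{any} type $(\ell,r)$ on \emph{any} interval, whereas your criterion from the previous paragraph only rules out the $(2,2)$-renormalization on the particular interval $[1-a/2,a/2]$. For $a>\sqrt2$ you have shown that one specific candidate fails, not that all candidates fail; a priori $f_a$ could still be renormalizable with, say, $(\ell,r)=(3,2)$. This is not a cosmetic point: primality of $\mathscr{R}^m f_a$ is exactly what is fed into Theorem~\ref{th:GS} to conclude that $\varphi_d^{2^m}|_{J_m}$ is mixing. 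Fortunately the fix is short. Suppose $[u,v]\ni 1/2$ is a renormalization interval of type $(\ell,r)$. Then $f_a^{\ell}$ maps $[u,1/2)$ affinely with slope $a^{\ell}$ into $[u,v]$, and $f_a^{r}$ maps $(1/2,v]$ affinely with slope $a^{r}$ into $[u,v]$, so
$$
a^{\ell}\bigl(\tfrac12-u\bigr)\le v-u \quad\text{and}\quad a^{r}\bigl(v-\tfrac12\bigr)\le v-u .
$$
Adding these and using $a>1$ gives $a^{\min(\ell,r)}(v-u)\le 2(v-u)$, and since $\ell,r\ge 2$ this forces $a^{2}\le 2$. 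Hence no renormalization of any type exists when $a>2^{2^{-1}}$, which closes the gap and yields the exact count $m=m(a)$.
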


As an immediate consequence of Theorems~\ref{th:GS} and \ref{th:lorenz} we have the following result.

\begin{corollary}\label{co:lorenz}
$f_a$ has a unique ergodic acip and $2^m$ mixing components.
\end{corollary}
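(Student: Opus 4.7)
I would combine Parry's renormalization (Theorem~\ref{th:lorenz}), Glendinning--Sparrow (Theorem~\ref{th:GS}), and the spectral decomposition (Theorem~\ref{th:folk}), using $J_{m}$ as the base where the dynamics is prime.

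By Theorem~\ref{th:lorenz}, $\mathscr{R}^{m}f_{a}=f_{a}^{2^{m}}|_{J_{m}}$ is a prime piecewise expanding Lorenz map. Theorem~\ref{th:GS} supplies a unique mixing acip $\mu$ with $\supp\mu=J_{m}$; since the mixing period is therefore $1$, Theorem~\ref{th:folk}(3) applied to $f_{a}^{2^{m}}|_{J_{m}}$ upgrades this to exactness of $\mu$. I would then define
\[
\nu:=\frac{1}{2^{m}}\sum_{j=0}^{2^{m}-1}(f_{a}^{j})_{\ast}\mu ,
\]
which is $f_{a}$-invariant (since $(f_{a}^{2^{m}})_{\ast}\mu=\mu$) and absolutely continuous. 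Set $\Sigma_{j}:=f_{a}^{j}(J_{m})$.

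The next step is to show by induction on the renormalization level that the $\Sigma_{j}$'s are essentially pairwise disjoint with essential union $[0,1]$. At each level $k\ge 1$, central symmetry forces the renormalization combinatorics $(\ell,r)=(2,2)$, which yields the essentially disjoint decomposition
\[
J_{k-1}=J_{k}\,\sqcup\, f_{a}^{2^{k-1}}(J_{k});
\]
iterating this from $k=m$ down to $k=1$ assembles $J_{0}=[0,1]$ as the essentially disjoint cycle $\bigcup_{j=0}^{2^{m}-1}\Sigma_{j}$ cyclically permuted by $f_{a}$.

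Given this structure, on each $\Sigma_{j}$ the system $(f_{a}^{2^{m}}|_{\Sigma_{j}},\nu|_{\Sigma_{j}})$ is measure-theoretically conjugate via $f_{a}^{j}$ to the exact system $(f_{a}^{2^{m}}|_{J_{m}},\mu)$; hence $\nu$ is ergodic with mixing period exactly $2^{m}$ and mixing components $\Sigma_{0},\ldots,\Sigma_{2^{m}-1}$. Uniqueness is then immediate because distinct ergodic acips are mutually singular: any other ergodic acip $\nu'$ would have support essentially disjoint from $\supp\nu\supset\bigcup_{j}\Sigma_{j}$, but the latter has full Lebesgue measure, contradicting absolute continuity of $\nu'$. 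The main obstacle is proving the essentially disjoint cycle decomposition---concretely, that the first-return time of $J_{k}$ to itself under $f_{a}$ is exactly $2^{k}$ and that the cycle exhausts $[0,1]$---which is where the period-doubling nature of Parry's renormalization for centrally symmetric Lorenz maps plays the crucial role.
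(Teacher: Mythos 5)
Your overall strategy---building the acip from the prime renormalization via Theorems~\ref{th:lorenz} and~\ref{th:GS} and identifying the mixing components with the cycle $\Sigma_j=f_a^j(J_m)$---is exactly what the paper intends when it calls the corollary an immediate consequence of those two theorems. But the step you single out as the main obstacle is not merely unproved: half of it is false. The cycle is indeed essentially pairwise disjoint, yet it does not exhaust $[0,1]$ once $m\ge 1$. Concretely, for $1<a<\sqrt{2}$ one finds $J_1=[1-a/2,\,a/2]$ and
$$f_a(J_1)=\bigl(0,\,\tfrac{a(a-1)}{2}\bigr]\cup\bigl[\,1+\tfrac{a}{2}-\tfrac{a^2}{2},\,1\bigr)\,,$$
so that $|J_1\cup f_a(J_1)|=a^2-1<1$; the gap $\bigl(\tfrac{a(a-1)}{2},\,1-\tfrac{a}{2}\bigr)$ is nonempty precisely because $a<\sqrt2$, and its points fall into the cycle only after finitely many iterates. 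The same happens at every deeper renormalization level. Hence $\bigcup_j\Sigma_j$ does not have full Lebesgue measure, and your uniqueness argument---that a second ergodic acip would have to be singular with respect to a set of full Lebesgue measure---collapses.

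The repair is simple and is surely the one the authors have in mind: uniqueness is already contained in Theorem~\ref{th:folk}(1). A Lorenz map has exactly two branches of monotonicity, $I_0=[0,c]$ and $I_1=[c,1]$, so the index $m$ appearing in the definition of a piecewise expanding map in Section~\ref{sec:expanding maps} equals $1$ here, and Theorem~\ref{th:folk}(1) forces $j=1$: there is at most one ergodic acip, with no computation of supports required. With uniqueness secured this way, the rest of your argument goes through once you prove only the disjointness half of your claim---equivalently, that the first return time of $J_k$ to itself under $f_a$ is exactly $2^k$---since that is what guarantees the mixing period is exactly $2^m$ rather than a proper divisor of it. The explicit computation above for $J_1$, transported to the return maps $f_a^{2^{k-1}}|_{J_{k-1}}$ (which are again centrally symmetric affine Lorenz maps with slope $a^{2^{k-1}}\le\sqrt2$ for $k\le m$), supplies exactly this and nothing more.
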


%\begin{remark}
%Since $f_a$ is piecewise expanding with a single discontinuity, it has a unique ergodic acip $\mu_a$. By Theorem~\ref{th:GS}, the ergodic acip $\m_a$ supported on a finite union of $2^{m+1}-1$ pairwise disjoint intervals, the mixing components of $f_a$, which have mixing period $2^m$. If $m=0$ then $f_a$ is prime, and so it is mixing. Otherwise, $f_a$ has $2^m$ mixing components. 
%\end{remark}

\begin{remark}
When $\sqrt{2}<a\leq2$, we have $m(a)=0$. Thus $f_a$ is mixing.
\end{remark}

\subsection{Polygonal slap maps}

%\added{The next comments are not needed.}
%\deleted{The polygonal slap map $ \psi_{P} $ introduced in Section~\ref{sec:introduction} is a piecewise affine map, and the closure of its graph can be regarded as the graph of a multivalued map, which we refer to as the {\em multivalued extension} of $\psi_{P}$. }
%
%\added{The next paragraph introduces terminology that is not used in this paper. Some concepts introduced here may be used in the last section of this paper, but it seems that all is needed is already introduced there.}
%\deleted{We call {\em forward orbit} of $\psi_{P}$ any sequence of points $\{s_n\}_{n\geq 0}$ where for each $n\geq 0$ either $s_{n+1}= \psi_{P}(s_{n})$, or else $s_n$ is a vertex and $s_{n+1}= \psi_{P}( s_{n}^\pm ) :=\lim_{s\to s_{n}^\pm} \psi_{P}(s_{n})$. 
%Equivalently, a forward orbit of $\psi_{P}$ is any orbit of its multivalued extension.
%Each vertex $c$ of $P$ has at least two forward orbits under the slap map,
%$\{\psi_{P}^n(c^+)\}_{n\geq 0}$ and $\{\psi_{P}^n(c^-)\}_{n\geq 0}$.}

\begin{defn}
We say that a polygon $ P \in\PP_{d} $ has \emph{parallel sides facing each other} if $ P $ contains a segment intersecting orthogonally two of its sides and having no other intersections with $ \partial P $.
%straight line joining orthogonally two of its sides.
\end{defn}

If a polygon $ P \in \PP_{d} $ does not have parallel sides facing each other, then the polygonal slap map $ \psi_{P} $ introduced in Section~\ref{sec:introduction} is piecewise expanding (see~Fig.~\ref{fig:slapmap}). On the other hand, if $ P $ has parallel sides facing each other, then $ \psi_{P} $ is not expanding, because the restriction of $ \psi^{2}_{P} $ to a subinterval of the domain of $ \psi_{P} $ is equal to the identity (i.e. $ \psi_{P} $ has a continuous family of periodic points of period two). 
%\deleted{give rise to period two orbits.} 

%It is easy to see that the set of polygons in $\PP_d$ without parallel sides facing each other is clearly residual and full measure. \added{Worth explaining what measure is this one. Are these properties used in this paper?}

\begin{corollary}\label{co:acipslap}
Suppose that $ P \in \PP_{d} $ does not have parallel sides facing each other. Then the map $ \psi_{P} $ has exactly $ j $ ergodic acip's for some $ 1 \le j \le d $. Moreover, every acip of $ f $ is supported on finitely many intervals. 
\end{corollary}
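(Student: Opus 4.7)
The proposal is to apply Theorem~\ref{th:acip} to $\psi_{P}$, so the task reduces to verifying the piecewise expanding hypothesis and then extracting the bound $j\le d$ from the geometry of the polygon.

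I would first set up the natural partition: parameterize $\partial P$ by arc length and let $e_{1},\dots,e_{d}$ denote the $d$ closed subintervals corresponding to the sides of $P$. On each $e_{i}$ the inward normal direction is constant, so the subset of $e_{i}$ whose normal ray first meets a fixed side $e_{j}$ is a subinterval on which $\psi_{P}$ is affine, with slope $1/|\cos\alpha_{ij}|$ where $\alpha_{ij}$ is the angle between $e_{i}$ and $e_{j}$. The hypothesis that $P$ has no parallel sides facing each other precisely forbids $\alpha_{ij}\in\{0,\pi\}$ on any branch contributing to $\psi_{P}$, so the finite collection of branch slopes is uniformly bounded below by some $\sigma>1$. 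Each branch being affine makes $1/|\psi_{P}'|$ piecewise constant and of bounded variation. Refining the partition $\{e_{1},\dots,e_{d}\}$ into affine pieces therefore produces closed intervals $I_{0},\dots,I_{m}$ fitting the definition in Section~\ref{sec:expanding maps}, and Theorem~\ref{th:acip} immediately supplies the existence of finitely many ergodic acip's whose supports consist of finitely many intervals, which is the second assertion of the corollary.

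For the sharper count $j\le d$, I would exploit the fact that $\psi_{P}$ is continuous on each entire side $e_{i}$: as $s$ moves along $e_{i}$, the normal ray translates parallel to itself, so the first-hit point varies continuously, and a change of target side can occur only when the ray passes through a vertex of $P$, where the two candidate images coincide. Hence $\psi_{P}$ has exactly $d$ intervals of continuity, and the variant of the Lasota--Yorke--Boyarsky--G\'ora bound that counts intervals of continuity rather than monotonicity intervals (cf.~\cite{BG97}) yields $j\le d$. The only nontrivial verification is this continuity claim; the remainder is a direct invocation of Theorem~\ref{th:acip}.
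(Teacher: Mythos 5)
Your overall route is the same as the paper's: check that the absence of parallel sides facing each other makes $\psi_{P}$ piecewise expanding, then quote Theorem~\ref{th:acip}. (The paper's proof is literally that one sentence.) You are in fact more careful than the paper about where the bound $j\le d$ comes from: the partition of $[0,L]$ into affine branches generally has more than $d$ pieces, since a single side is typically subdivided according to which side its normals first hit, so the raw count supplied by Theorem~\ref{th:acip} is the number of affine branches, not $d$. Your fix --- each ergodic support must contain a genuine jump discontinuity of $\psi_P$ in its interior, so $j$ is bounded by the number of maximal intervals of continuity --- is the right idea and is what makes the sharper bound work (it is also what the paper implicitly uses later when it says $\psi_{P_d}$ has ``$d$ points of discontinuity'').

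The gap is in your continuity claim. You assert that $\psi_{P}$ is continuous on each entire side $e_{i}$ because the translating normal ray changes target side only by passing through a vertex, where the two candidate images coincide. That is true for \emph{convex} polygons, but $\PP_{d}$ is defined as the set of all non-self-intersecting $d$-gons, convex or not. For a non-convex $P$, the first-hit point can jump in the interior of a side: as the parallel family of normal rays sweeps past a reflex vertex $v$, the rays stop hitting the edge that terminates at $v$ and suddenly hit a different, possibly distant, portion of $\partial P$; the two one-sided limits of $\psi_P$ there do not coincide, and this configuration is not excluded by the ``no parallel sides facing each other'' hypothesis. Consequently the number of maximal intervals of continuity can exceed $d$, and your argument only delivers $j\le d$ in the convex case (which is admittedly all the paper ever uses, for regular polygons and triangles). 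To cover the corollary as stated you would need either to restrict to convex polygons or to supply a different argument bounding $j$ by $d$ in the presence of these extra reflex-vertex discontinuities.
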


\begin{proof}
Since $ \psi_{P} $ is piecewise expanding, the claim follows from Theorem~\ref{th:folk}.
\end{proof}

\begin{remark}
The above maximum number of acip's cannot be improved. Indeed, in Theorem~\ref{th:Pd7} we show that if $ P \in \PP_{d} $ is regular and $ d \ge 7 $ is odd, then $ \psi_{P} $ has exactly $ d $ ergodic acip's.
\end{remark}

%\added{In the next section, we study in detail the properties of the ergodic acip for certain polygons.}

%\section{Slap Maps} 
%\label{sec:slap maps}
%\input{slap.tex}

%\section{Examples}
%\label{sec:examples}
%\input{examples.tex}

\section{Regular Polygons}
\label{sec:regular polygons}
%!TEX root = ergodic_main.tex

In this section we characterize the ergodicity of slap maps on regular polygons with an odd number of sides.

Denote by $P_d$ the regular $d$-gon, $d\geq 3$. Suppose that the sides of $P_d$ have unit length. The slap map $\psi_{P_d}$ is a one dimensional piecewise affine map of the interval $[0,d]$ with $d$ points of discontinuity. The symmetries of the regular $d$-gon imply that 
$$
\psi_{P_d}(x+1)-1=\psi_{P_d}(x)\pmod{d}\,.
$$

% \begin{center}
% \includegraphics[width=4.5cm]{Images/phasered.png}
% \end{center}

Passing to the quotient we obtain the {\it reduced slap map} $\phi_{P_d}:[0,1]\to[0,1]$ given by 
$$
\phi_{P_d}(x)=\begin{cases}
-\frac{1}{\beta_d}\,\left(x-\frac{1}{2}\right) \; ( {\rm mod}\, 1) \;&\text{if $d$ is odd }\\
1-x&\text{ otherwise }
\end{cases},
$$
where $\beta_d=\cos(\frac{\pi}{d})$ is an algebraic number. A detailed derivation of the reduced slap map can be found in \cite[Section 6]{MDDGP13}. Clearly, when $d$ is even, the slap map is an involution, i.e. every point is periodic of period two. On the other hand, when $d$ is odd, the slap map is expanding. Since it has $d$ points of discontinuity, by Corollary~\ref{co:acipslap}, $\psi_{P_d}$ has at most $d$ ergodic acips, each supported on a union of finitely many intervals.

For the remaining part of this section we shall assume that $d$ is an odd number and to simplify the notation we shall write $\phi_d$ and $\psi_d$ for the reduced slap map and slap map of $P_d$, respectively.

%The proof of the following lemma can be found in \cite{MDDGP13}.
%
%\begin{lemma}\label{lem:slapmap} The reduced slap map of $P_n$ is given by 
%$$
%\phi_{P_n}(x)=\begin{cases}
%-\frac{1}{\beta_n}\,\left(x-\frac{1}{2}\right) \; ( {\rm mod}\, 1) \;&\text{if $n$ is odd }\\
%1-x&\text{ otherwise }
%\end{cases},
%$$
%where $\beta_n=\cos(\frac{\pi}{n})$ is an algebraic number. 
%\end{lemma}

The following lemma explains the relation between the slap map and its reduced slap map. Let $\mathbb{Z}_d=\mathbb{Z}/d\mathbb{Z}$ be the group of integers modulo $d$.

\begin{lemma}\label{lem:slapconjugation}
The slap map $\psi_{P_d}$ is conjugated to the skew-product map $F_{d}:[0,1]\times\mathbb{Z}_d\to[0,1]\times\mathbb{Z}_d$ defined by 
$F_{d}(x,s)=\left(\phi_{d}(x),\sigma_x(s)\right)$
where
$$
\sigma_x(s)=s+\left[\frac{d}{2}\right]\delta(x)\quad\text{and}\quad\delta(x)=\begin{cases}-1&x<1/2\\1&x>1/2\end{cases}\,.
$$
\end{lemma}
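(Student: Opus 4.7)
The plan is to build an explicit conjugacy $h\colon[0,d]\to[0,1]\times\Zz_d$ (away from the measure-zero set of integer $s$-values, which correspond to vertices of $P_d$) and to verify the intertwining identity $h\circ\psi_{P_d}=F_d\circ h$. The natural candidate is $h(x)=(\{x\},\lfloor x\rfloor\bmod d)$, where $\{x\}$ and $\lfloor x\rfloor$ denote the fractional and integer parts. Geometrically, the second coordinate records which side of $P_d$ the point sits on, and the first coordinate records the position along that side.

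Once $h$ is fixed, I would reduce the verification to the base side $x\in[0,1)$ using the symmetry identity $\psi_{P_d}(x+1)-1\equiv\psi_{P_d}(x)\pmod d$ already displayed in the text. Iterating this identity yields $\psi_{P_d}(s+y)\equiv s+\psi_{P_d}(y)\pmod d$ for every integer $0\le s<d$ and every $y\in[0,1)$. Consequently, once the image of a point $y\in[0,1)$ is understood, the image of any $x=s+y$ is obtained by a cyclic shift of the side index by $s$, and its fractional part is unchanged. This is exactly the structure of a skew-product over $\phi_d$.

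The core of the proof is therefore the computation on the first side. Since the normal map was defined by shooting the perpendicular to $\partial P_d$ at $y$ inward, the reflection symmetry of $P_d$ across the perpendicular bisector of side $0$ implies that the images of $y$ and $1-y$ lie on sides symmetric across that bisector. Because $d$ is odd, no side faces side $0$; the perpendicular at the midpoint $y=1/2$ passes through the opposite vertex, which is the common endpoint of sides $[d/2]$ and $[d/2]+1$. A short trigonometric argument (or, equivalently, a direct appeal to the detailed derivation of $\phi_d$ in \cite[Section 6]{MDDGP13}) then shows that the normal from $y\in(0,1/2)$ lands on side $[d/2]+1\equiv -[d/2]\pmod d$, while the normal from $y\in(1/2,1)$ lands on side $[d/2]$. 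In both cases, the side index is precisely $\sigma_y(0)=[d/2]\,\delta(y)\pmod d$.

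Combining this with the defining property of the reduced slap map, namely $\psi_{P_d}(y)\equiv\phi_d(y)\pmod 1$ for $y\in[0,1)$, we conclude
\[
h(\psi_{P_d}(y))=\bigl(\phi_d(y),\,\sigma_y(0)\bigr)=F_d(y,0)=F_d(h(y)),
\]
which is the required intertwining on the base side. Extending by the symmetry identity above then yields $h\circ\psi_{P_d}=F_d\circ h$ on all of $[0,d]$, proving the lemma. The only genuinely delicate point is the geometric identification of which side each half of side $0$ maps to; once that trigonometric fact is in hand, the remainder is bookkeeping with the quotient structure induced by the rotational symmetry of $P_d$.
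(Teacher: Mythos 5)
Your proof is correct and follows essentially the same route as the paper: the paper simply defines the conjugacy $H(x,s)=x+s$ (the inverse of your $h$) and asserts that the diagram commutes by ``a simple computation.'' Your write-up supplies the details of that computation --- the reduction to side $0$ via the translation symmetry and the geometric identification of the target side as $[d/2]\,\delta(y)$ --- which the paper omits.
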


\begin{proof}
Let $H:[0,1]\times\mathbb{Z}_d\to[0,d]$ be the map defined by $H(x,s)=x+s$. Then a simple computation shows that the following diagram commutes.
$$
\begin{CD}
[0,1]\times\mathbb{Z}_d @>F_{d}>> [0,1]\times\mathbb{Z}_d\\
@VVHV @VVHV\\
[0,d] @>\psi_{d}>> [0,d]
\end{CD}
$$
\end{proof}

%\begin{remark}\label{rem:slapconjugation}
%When $d$ is even, the slap map is an involution, i.e. every point is periodic of period two. When $d$ is odd, the slap map is expanding. Since it has $d$ points of discontinuity, by Corollary~\ref{co:acipslap}, $\psi_{P_d}$ has at most $d$ ergodic acips, each supported on a union of finitely many intervals. 
%\end{remark}

\begin{remark}\label{rem:slapconjugation2}
The reduced slap map is a factor of the slap map, i.e. $\pi\circ F_{P_d}=\phi_{P_d}\circ \pi$ where $\pi:[0,1]\times\mathbb{Z}_d\to[0,1]$ is the projection $\pi(x,s)=x$.
\end{remark}

Let $\mu$ be an acip of the slap map $\psi_d$. We denote by $\hat{\mu}$ the pushforward measure by $H^{-1}$, i.e. $\hat{\mu}:=(H^{-1})_*\mu$, where $H$ is the conjugation in Lemma~\ref{lem:slapconjugation}. Clearly, $\hat{\mu}$ is an invariant measure for the skew-product $F_{d}$. It follows from Theorem~\ref{th:folk} and Remark~\ref{rem:slapconjugation2} that

\begin{lemma}\label{lem:muhat}
The reduced slap map $\phi_d$ has a unique ergodic acip $\nu_d$. Moreover, $\pi_*\hat{\mu}=\nu_d$ for any acip $\mu$ of the slap map $\psi_d$. 
\end{lemma}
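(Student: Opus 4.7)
The plan is to reduce the analysis of $\phi_d$ to the centrally symmetric Lorenz map $f_a$ with $a=1/\beta_d$, to which Corollary~\ref{co:lorenz} directly applies. The key observation is the factorization
\[
\phi_d=\tau\circ f_a=f_a\circ\tau,
\]
where $\tau(y)=1-y$ is the reflection about $1/2$; the second equality expresses the central symmetry of $f_a$, and together they give $\phi_d^{\,2}=f_a^{\,2}$. Let $\nu_a$ denote the unique ergodic acip of $f_a$ provided by Corollary~\ref{co:lorenz}. Since $\tau$ commutes with $f_a$, the measure $\tau_\ast\nu_a$ is again an $f_a$-invariant acip, and uniqueness forces $\tau_\ast\nu_a=\nu_a$; consequently $\phi_{d\ast}\nu_a=\tau_\ast(f_{a\ast}\nu_a)=\nu_a$, so $\nu_d:=\nu_a$ is a $\phi_d$-invariant acip. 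Because Theorem~\ref{th:folk} guarantees that any acip of $\phi_d$ decomposes into finitely many ergodic acips, the whole lemma will follow once I prove $\phi_d$ has only one ergodic acip.

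For uniqueness, let $\nu$ be any ergodic acip of $\phi_d$; then $\nu$ is $f_a^{\,2}$-invariant. When $m(a)=0$, the remark after Corollary~\ref{co:lorenz} ensures $f_a$ is mixing, hence $f_a^{\,2}$ is ergodic, forcing $\nu=\nu_a$. When $m(a)\ge 1$, denote by $\Sigma_1=J_m,\Sigma_2,\ldots,\Sigma_{2^{m(a)}}$ the mixing components of $f_a$ (cyclically permuted), and split them into
\[
\Omega_0=\bigcup_{j\text{ odd}}\Sigma_j,\qquad \Omega_1=\bigcup_{j\text{ even}}\Sigma_j,
\]
which yield the two $f_a^{\,2}$-ergodic components of $\supp\nu_a$, with corresponding $f_a^{\,2}$-ergodic acips $\nu_a|_{\Omega_0}/\nu_a(\Omega_0)$ and $\nu_a|_{\Omega_1}/\nu_a(\Omega_1)$. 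Since $J_m$ is centered at $1/2$ one has $\tau(J_m)=J_m$, and $\tau\circ f_a=f_a\circ\tau$ propagates this symmetry to $\tau(\Sigma_j)=\Sigma_j$ for every $j$; hence $\tau$ preserves each $\Omega_i$, while $f_a$ swaps $\Omega_0$ and $\Omega_1$. Consequently $\phi_d=\tau\circ f_a$ also swaps $\Omega_0$ and $\Omega_1$, which rules out any ergodic $\phi_d$-acip supported on a single $\Omega_i$; expanding $\nu$ as a convex combination of the two $f_a^{\,2}$-ergodic acips, the $\phi_d$-invariance then forces the coefficients to be equal, giving $\nu=\nu_a=\nu_d$.

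For the second assertion, let $\mu$ be any acip of $\psi_d$. The conjugation $H$ of Lemma~\ref{lem:slapconjugation} is piecewise a translation, so $\hat\mu=(H^{-1})_\ast\mu$ is absolutely continuous on $[0,1]\times\Zz_d$ and $F_d$-invariant. Remark~\ref{rem:slapconjugation2} then implies that $\pi_\ast\hat\mu$ is $\phi_d$-invariant, absolutely continuous, and a probability, whence the uniqueness just proved gives $\pi_\ast\hat\mu=\nu_d$. The main obstacle is the uniqueness step: one must exploit the interaction between the $\tau$-symmetry and the renormalization structure of $f_a$ to track precisely how $\phi_d$ acts on the two $f_a^{\,2}$-ergodic components $\Omega_0,\Omega_1$ when $m(a)\ge 1$, thereby excluding the a priori possibility that $\phi_d$ admits a second ergodic acip supported on one of them.
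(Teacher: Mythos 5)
Your proof is correct, but it takes a genuinely different and much heavier route than the paper's, which disposes of the lemma in essentially one line. The paper's argument is: $\phi_d$ has exactly two branches of monotonicity (a single interior discontinuity at $x=1/2$), so in the notation of Theorem~\ref{th:folk} one has $m=1$ and hence $j=1$, i.e.\ exactly one ergodic acip; then $\pi\circ F_d=\phi_d\circ\pi$ (Remark~\ref{rem:slapconjugation2}) makes $\pi_*\hat\mu$ a $\phi_d$-invariant acip, which must equal $\nu_d$ since every acip is a convex combination of the (here unique) ergodic ones. Your detour through the renormalization structure of $f_a=\varphi_d$ --- showing that $\tau$ fixes each mixing component $\Sigma_j$ while $f_a$ advances the cycle, so that $\phi_d=\tau\circ f_a$ swaps the two $f_a^2$-ergodic pieces $\Omega_0,\Omega_1$ --- is sound, modulo one unstated but standard step: to expand an arbitrary ergodic $\phi_d$-acip $\nu$ as a convex combination of the two $f_a^2$-ergodic acips you need $\nu\ll\nu_a$, which follows from the averaging identity $\tfrac12\bigl(\nu+(f_a)_*\nu\bigr)=\nu_a$. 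What the paper's route buys is brevity and independence from the Lorenz-map machinery; what yours buys is extra structural information ($\tau_*\nu_a=\nu_a$, the equality $\nu_a(\Omega_0)=\nu_a(\Omega_1)$, and the swapping of the two $f_a^2$-components by $\phi_d$), which is essentially the content the paper only extracts later, in Lemma~\ref{lem:varphin} and the remark following it, when counting the $2^{m}$ mixing components. For this particular lemma the renormalization analysis is overkill, but it is not wrong.
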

%
%\begin{proof}
%It is clear that $\phi_{d}:[0,1]\to[0,1]$ is a piecewise affine expanding map with a single discontinuity point at $x=1/2$. The existence and uniqueness of $\nu_d$ follows from Theorem~\ref{th:folk}. The last assertion follows from Remark~\ref{rem:slapconjugation2} and the uniqueness of $\nu_d$. Indeed, 
%$$
%\pi_*\hat{\mu}=(\pi\circ F_d)_*\hat{\mu}=(\phi_d\circ\pi)_*\hat{\mu}=(\phi_d)_*\pi_*\hat{\mu}\,,
%$$
%which implies that $\pi_*\hat{\mu}$ is an acip of the reduced slap map $\phi_d$. Thus $\pi_*\hat{\mu}=\nu_d$ by uniqueness.
%\end{proof}

%\begin{remark}\label{rem:acipskewprod}
%Let $P_n$ be a regular polygon with an odd number of sides. The reduced slap map $\phi_{P_n}$ is piecewise-affine expanding with a single discontinuity point at $x=1/2$. By Theorem~\ref{th:folk}, $\phi_{P_n}$ has a unique ergodic acip $\nu_n$ supported on the union of finitely many intervals. Now, let $\mu$ be an acip of the slap map $\Phi_{0,P_n}$. The pushforward measure $\hat{\mu}:=(H^{-1})_*\mu$ by $H^{-1}$ is an invariant measure for the skew-product $F_{P_n}$. By uniqueness of $\nu_n$, $\pi_*\hat{\mu}=\nu_n$ where $\pi:[0,1]\times\mathbb{Z}_n\to[0,1]$ is the projection $\pi(x,s)=x$. 
%\end{remark}

 Let $\varphi_d=\varphi\circ\phi_d$ where $\varphi:[0,1]\to[0,1]$ is the involution $\varphi(x)=1-x$. Also, let $m=m(d)$ be the unique nonnegative integer such that $$2^{2^{-m-1}}<1/\beta_d\leq 2^{2^{-m}}\,.$$

\begin{lemma}\label{lem:varphin}

The following holds:
\begin{enumerate}
	\item $\varphi_d$ is a centrally symmetric piecewise affine Lorenz map,
%$$
%\varphi_n(x)=\frac{1}{\beta_n}\,\left(x-\frac{1}{2}\right) \; ( {\rm mod}\, 1)\,.
%$$
\item $(\varphi_d)_*\nu_d=\nu_d$,
\item there exist centrally symmetric intervals 
$$
J_m\subset J_{m-1}\subset\cdots\subset J_1\subset J_0=[0,1]
$$
such that $\varphi_{d}^{2^k}|_{J_k}$ is a Lorenz map for $k=0,\ldots,m$ and $\varphi_{d}^{2^m}|_{J_m}$ is mixing.
\end{enumerate}
\end{lemma}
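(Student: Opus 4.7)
The whole lemma reduces to the observation that $\varphi_d$ is exactly the centrally symmetric map $f_a$ of Theorem~\ref{th:lorenz} for $a=1/\beta_d$, after which (1)--(3) follow from the general Lorenz theory already quoted.

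\textbf{Part (1).} I would compute $\phi_d$ explicitly on each of its two monotone branches. With $a=1/\beta_d\in(1,2]$, the formula $\phi_d(x)=-a(x-1/2)\pmod 1$ gives a strictly decreasing affine branch $a(1/2-x)$ on $[0,1/2)$, running from $a/2$ down to $0$, and a strictly decreasing affine branch $1-a(x-1/2)$ on $(1/2,1]$, running from $1$ down to $1-a/2$. Applying $\varphi(y)=1-y$ to the output yields
\[
\varphi_d(x)=
\begin{cases}
1-a/2+ax,& x\in[0,1/2),\\[2pt]
a(x-1/2),& x\in(1/2,1],
\end{cases}
\]
which is exactly $a(x-1/2)\pmod 1$, i.e. $\varphi_d=f_{1/\beta_d}$ in the notation preceding Theorem~\ref{th:lorenz}. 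In particular $\varphi_d$ is a centrally symmetric piecewise affine Lorenz map with discontinuity at $c=1/2$, $\varphi_d(1/2^-)=1$, $\varphi_d(1/2^+)=0$.

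\textbf{Part (2).} The key is the symmetry $\phi_d\circ\varphi=\varphi\circ\phi_d$, which is immediate from the branch formulas: replacing $x$ by $1-x$ flips the two branches of $\phi_d$ and simultaneously reflects the output. Because $\varphi$ is an involutive symmetry of $\phi_d$, the pushforward $\varphi_\ast\nu_d$ is an acip of $\phi_d$, so by the uniqueness clause of Lemma~\ref{lem:muhat} we get $\varphi_\ast\nu_d=\nu_d$. Then
\[
(\varphi_d)_\ast\nu_d=(\varphi\circ\phi_d)_\ast\nu_d=\varphi_\ast(\phi_d)_\ast\nu_d=\varphi_\ast\nu_d=\nu_d.
\]

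\textbf{Part (3).} Since $\varphi_d=f_{1/\beta_d}$, the integer $m=m(d)$ in the statement coincides with the integer $m(a)$ of Theorem~\ref{th:lorenz} for $a=1/\beta_d$. Theorem~\ref{th:lorenz} therefore supplies the nested sequence of centrally symmetric renormalization intervals $J_m\subset\cdots\subset J_0=[0,1]$ with $\mathscr{R}^k\varphi_d=\varphi_d^{2^k}|_{J_k}$ a Lorenz map for $k=0,\ldots,m$. By definition of $m$, the final renormalization $\varphi_d^{2^m}|_{J_m}$ is prime, and Theorem~\ref{th:GS} then guarantees that it admits a unique mixing acip, which is what (3) asserts.

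\textbf{Main obstacle.} None of the steps is deep: the computation in Part (1) is routine but must be done carefully enough to match the exact convention $f_a(x)=a(x-1/2)\pmod 1$ used in Theorem~\ref{th:lorenz}; the only genuinely substantive input is invoking the uniqueness of $\nu_d$ from Lemma~\ref{lem:muhat} in Part (2), and the renormalization machinery of Parry summarized in Theorems~\ref{th:GS} and \ref{th:lorenz} in Part (3).
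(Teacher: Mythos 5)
Your proof is correct and follows the same route as the paper: part (2) is verbatim the paper's argument (the $\varphi$-symmetry of $\phi_d$ plus uniqueness of $\nu_d$), and parts (1) and (3) are exactly the reduction to the centrally symmetric family $f_a$ with $a=1/\beta_d$ and the appeal to Theorems~\ref{th:GS} and~\ref{th:lorenz}, which the paper states without spelling out. The only difference is that you carry out the "simple computation" for part (1) explicitly, and your computation checks out.
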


\begin{proof}
The first assertion of the lemma is a simple computation, so we omit it. To prove the second assertion, note that the reduced slap map is $\varphi$-symmetric, i.e. $\varphi\circ\phi_d=\phi_d\circ\varphi$, which implies that $\varphi_*\nu_d$ is also an acip of $\phi_d$. By uniqueness of $\nu_d$ we conclude that $\varphi_*\nu_d=\nu_d$. Thus, 
$(\varphi_d)_*\nu_d=(\varphi\circ\phi_d)_*\nu_d=\varphi_*\nu_d=\nu_d$.
Finally, the last assertion follows from Theorem~\ref{th:GS} and Theorem~\ref{th:lorenz}.
\end{proof}

\begin{remark}
By Corollary~\ref{co:lorenz}, the Lorenz map $\varphi_d$ has $2^m$ mixing components. Taking into account that $\phi_d=\varphi_d\circ\varphi$ and $\varphi^2=\mathrm{id}$, we conclude that the reduced slap map $\phi_d$ also has $2^m$ mixing components.
\end{remark}

%\begin{table}[h]
%\centering
%\begin{tabular}{|l|l|l|l|l|l|l|l|l|l|l|l|}
%\hline
%n&3&5&7&9&11&13&15&17&19&21&23\\\hline
%m&0&1&2&3&4 &4 &4 &5 &5 &5& 6\\
%\hline
%\end{tabular}
%\caption{(n,m)}
%\label{tab:nm}
%\end{table}

%By Theorem~\ref{th:lorenz}, there exists a centrally symmetric interval $J_m\subset [0,1]$, i.e. $1/2\in J_m$ and $\varphi(J_m)=J_m$, such that $\varphi_n^{2^m}|_{J_m}$ is prime. It follows from Lemma~\ref{lem:varphin} that the support of the ergodic acip $\nu_n$ contains the interval $J_m$.

%See Table~\ref{tab:nm}.  
%This implies that $J_k\times\mathbb{Z}_n$ is forward invariant by $F_n^{2^k}$ for every $k=0,\ldots,m$. 

For every integer $n\geq0$ we write $F_{d}^{n}(x,s)=\left(\phi_{d}^{n}(x),\sigma_x^{n}(s)\right)$ where $\sigma_x^{n}(s):=\sigma_{\phi_{d}^{n - 1}(x)}\circ \cdots \circ \sigma_x(s)$. Note that $\sigma_x^n$ is a translation on $\mathbb{Z}_d$, i.e.
$\sigma_x^n(s)=s+\alpha_n(x)$ where
$$
\alpha_n(x):=\left[\frac{d}{2}\right]\sum_{i=0}^{n -1}\delta(\phi_{d}^i(x))\,.
$$

\begin{lemma}\label{lem:dk}
For $k=0,\ldots,m$, the following holds:
\begin{enumerate}
	\item $F_d^{2^k}(\pi^{-1}(J_k))=\pi^{-1}(J_k)$,
	\item $\alpha_{2^k}(x)=a_k\delta(x)$ for every $x\in J_k$, where $a_0=\left[d/2\right]$, $a_1=-1$ and 
	$$a_k=0,\quad k=2,\ldots,m\,.$$

\end{enumerate}
\end{lemma}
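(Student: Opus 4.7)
My plan is to reduce both parts to Parry's centrally symmetric Lorenz family (Theorem~\ref{th:lorenz}) via the identity $\phi_d^{2^k}=\varphi_d^{2^k}$ for $k\ge 1$. This identity is a direct consequence of the fact, already used in the proof of Lemma~\ref{lem:varphin}, that $\phi_d$ commutes with $\varphi$: combined with $\varphi_d=\varphi\circ\phi_d$, this implies that $\varphi$ and $\varphi_d$ also commute, and therefore $\phi_d^n=(\varphi\circ\varphi_d)^n=\varphi^n\circ\varphi_d^n$.

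Assertion (1) follows immediately from this identity and the skew-product structure. Since $F_d^{2^k}(x,s)=(\phi_d^{2^k}(x),s+\alpha_{2^k}(x))$ and the translation $s\mapsto s+\alpha_{2^k}(x)$ permutes $\mathbb{Z}_d$ for every fixed $x$, one has $F_d^{2^k}(\pi^{-1}(J_k))=\phi_d^{2^k}(J_k)\times\mathbb{Z}_d$. The case $k=0$ is immediate; for $k\ge 1$, Theorem~\ref{th:lorenz} gives that $\varphi_d^{2^k}|_{J_k}$ is a Lorenz map on $J_k$ and therefore surjects onto $J_k$ (up to a null set), so $\phi_d^{2^k}(J_k)=\varphi_d^{2^k}(J_k)=J_k$.

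For assertion (2), I use the cocycle identity $\alpha_{m+n}(x)=\alpha_m(x)+\alpha_n(\phi_d^m(x))$, which is immediate from the definition of $\alpha_n$, and proceed by cases. The case $k=0$ is the definition. For $k=1$, setting $a=1/\beta_d\le\sqrt{2}$ and using the Parry interval $J_1=[1-a/2,a/2]$, a direct computation shows that for $x\in J_1\cap[0,1/2)$, $\phi_d(x)=a(1/2-x)\le a(a-1)/2\le 1-1/\sqrt{2}<1/2$, so $\delta(\phi_d(x))=\delta(x)$; by symmetry the same holds for $x>1/2$. Hence $\alpha_2(x)=2[d/2]\delta(x)=(d-1)\delta(x)\equiv -\delta(x)\pmod{d}$, giving $a_1=-1$. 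The case $k=2$ is the real work: by the cocycle identity and $a_1=-1$, the statement $a_2=0$ reduces to the claim that $\phi_d^2$ \emph{flips} the side of $1/2$ on $J_2$. Since $\phi_d^2=\varphi_d^2=f_a^2$ and $f_a^2|_{J_1}$ is conjugate via an orientation-preserving affine map $h\colon J_1\to[0,1]$ fixing $1/2$ to $f_{a^2}|_{[0,1]}$ (with $J_2$ corresponding to $[1-a^2/2,a^2/2]$), I translate the question to $f_{a^2}$: for $y\in[1-a^2/2,1/2)$ one has $f_{a^2}(y)=1-a^2(1/2-y)\ge 1-a^2(a^2-1)/2>1/2$, the last inequality being equivalent to $a^4-a^2-1<0$, which is guaranteed by $m\ge 2$ (hence $a\le 2^{1/4}$). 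Since $h$ preserves sides of $1/2$, pulling back yields $\phi_d^2(x)>1/2$ for $x\in J_2\cap[0,1/2)$; the case $x>1/2$ is symmetric. Finally, for $k\ge 3$, induction closes the argument: if $a_{k-1}=0$, then both $\alpha_{2^{k-1}}(x)$ and $\alpha_{2^{k-1}}(\phi_d^{2^{k-1}}(x))$ vanish on $J_k$ (using assertion (1) at level $k-1$ to get $\phi_d^{2^{k-1}}(J_k)\subseteq J_{k-1}$), and the cocycle identity forces $a_k=0$.

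The main obstacle is the $k=2$ case, which relies on the precise geometry of Parry's renormalization---specifically that the conjugating affine map $h$ is orientation-preserving and fixes $1/2$, so that the ``side-flip'' established for $f_{a^2}$ can be pulled back to $\phi_d^2$ on $J_2$. All other cases reduce to a definition, a direct one-step computation, or a clean induction.
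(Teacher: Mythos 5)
Your proposal is correct and follows essentially the same route as the paper: item (1) comes from the Parry renormalization intervals of Lemma~\ref{lem:varphin}, and item (2) is obtained by recursion on the cocycle identity $\alpha_{2^{k+1}}(x)=\alpha_{2^k}(x)+\alpha_{2^k}(\phi_d^{2^k}(x))$. The only differences are cosmetic and in your favor: the paper derives the form $a_k\delta(x)$ from the antisymmetry $\alpha_{2^k}=-\alpha_{2^k}\circ\varphi$ and then merely asserts that the remaining $a_k$ "can be computed recursively," whereas you actually carry out those computations --- the explicit interval estimate giving $a_1=-1$ and the conjugation of $f_a^2|_{J_1}$ to $f_{a^2}$ giving the side-flip on $J_2$ and hence $a_2=0$ (using $a\le 2^{1/4}$), after which the induction for $k\ge 3$ is immediate.
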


\begin{proof}
Item (1) follows from (3) of Lemma~\ref{lem:varphin}. Now we prove (2). Let $J_k^-=J_k\cap[0,1/2)$ and $J_k^+=J_k\cap(1/2,1]$. Since $\phi_{d}^{2^k}$ is continuous on $J_k^{\pm}$, there exists $a_k^{\pm}\in\mathbb{Z}_d$ such that 
$\alpha_{2^k}(x)=a_k^{\pm}$ for every $x\in J_k^{\pm}$. A simple computation shows that $\alpha_{2^k}=-\alpha_{2^k}\circ\varphi$, which implies that $a_k^+=-a_k^-$. Thus, $\alpha_{2^k}(x)=a_k\delta(x)$ where $a_k:=a_k^+$. Clearly, $\alpha_1(x)=\left[d/2\right]\delta(x)$. So $a_0=\left[d/2\right]$. The remaining $a_k$ can be computed recursively from the equation $\alpha_{2^{k+1}}(x)=\alpha_{2^k}(x)+\alpha_{2^k}(\phi_{d}^{2^k}(x))$. This concludes the proof of the lemma.
\end{proof}

%By item (3) of Lemma~\ref{lem:varphin}, $F_n^{2^k}(\pi^{-1}(J_k))\subset\pi^{-1}(J_k)$ for every $k=0,\ldots,m$. In the following lemma we compute $\alpha_{2^k}$.
%
%\begin{lemma}\label{lem:dk}
%For every $k=0,\ldots,m$ and $x\in J_k$, $$\sigma_{2^k}(x)=d_k\delta(x)$$ where $d_0=\left[n/2\right]$, $d_1=-1$ and $d_k=0$ for $k=2,\ldots,m$.
%\end{lemma}
%
%\begin{proof}
%Let $J_k^-=J_k\cap[0,1/2)$ and $J_k^+=J_k\cap(1/2,1]$. Since $\phi_{n}^{2^k}$ is continuous on $J_k^{\pm}$, there exists $d_k^{\pm}\in\mathbb{Z}_n$ such that 
%$\sigma_{2^k}(x)=d_k^{\pm}$ for every $x\in J_k^{\pm}$. A simple computation shows that $\sigma_{2^k}=-\sigma_{2^k}\circ\varphi$, which implies that $d_k^+=-d_k^-$. Thus, $\sigma_{2^k}(x)=d_k\delta(x)$ where $d_k:=d_k^+$. Now we compute the coefficients $d_k$. By definition, $\sigma_1(x)=\left[n/2\right]\delta(x)$. So $d_0=\left[n/2\right]$. The remaining coefficients can be computed recursively from the equation $\sigma_{2^{k+1}}(x)=\sigma_{2^k}(x)+\sigma_{2^k}(\phi_{n}^{2^k}(x))$.
%\end{proof}

We now study each regular polygon separately. Given any interval $I\subset[0,1]$ let $I_s=I\times\{s\}$.

\subsection{Equilateral triangle}

Since $m(3)=0$, the reduced slap map $\phi_3$ of the equilateral triangle is mixing. It is easy to see that the same is true for the slap map $\psi_3$. 

\begin{theorem}\label{th:P3}
The slap map of the equilateral triangle has a unique mixing acip with support equal to the interval $[0,3]$.
\end{theorem}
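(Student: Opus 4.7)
The plan is to compute $\psi_3$ explicitly from Lemma~\ref{lem:slapconjugation} and observe that it coincides on $[0,3]$ with a piecewise affine expanding degree-$2$ cover of the circle $\Rr/3\Zz$ of constant slope, after which everything follows from classical facts. With $\beta_3 = \cos(\pi/3) = 1/2$ one has $\phi_3(x) = 1-2x$ on $[0,1/2)$ and $\phi_3(x) = 2-2x$ on $(1/2,1]$, and the fiber shift is $\sigma_x(s) = s + \delta(x) \pmod 3$. A case-by-case check on the six subintervals of $[0,3]$ cut out by the three side-midpoints $y = 1/2, 3/2, 5/2$ and the three vertices $y = 0, 1, 2$ reveals a key cancellation: the $\Zz_3$-shift $\sigma_x$ jumps by $\pm 1$ precisely when $\phi_3$ jumps between $0$ and $1$, so five of the six potential break points of $H \circ F_3 \circ H^{-1}$ disappear. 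The six naive branches collapse into just two,
\[
\psi_3(y) = 3 - 2y \text{ for } y \in [0, 3/2), \qquad \psi_3(y) = 6 - 2y \text{ for } y \in [3/2, 3],
\]
equivalently $\psi_3(y) \equiv -2y \pmod 3$ on $\Rr/3\Zz$.

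With this explicit form in hand, the remaining steps are standard. The partition $\{[0, 3/2], [3/2, 3]\}$ is a Markov partition for $\psi_3$ consisting of two full branches of constant slope $-2$, so a one-line transfer-operator computation gives that the normalized Lebesgue measure $\lambda := \Leb/3$ on $[0,3]$ is $\psi_3$-invariant. Moreover, the binary symbolic coding associated with this Markov partition yields a measure-theoretic isomorphism between $(\psi_3, \lambda)$ and the one-sided Bernoulli shift on two symbols with weights $(1/2, 1/2)$. Consequently $\psi_3$ is Bernoulli, and in particular mixing, with respect to $\lambda$.

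The statement of the theorem now follows: uniqueness of the ergodic acip is immediate from Theorem~\ref{th:folk} (or from the Bernoulli isomorphism), $\lambda$ is the mixing acip produced above, and $\supp(\lambda) = [0,3]$ gives the full-support conclusion. The only genuine content of the argument is the cancellation in the initial computation---reflecting the equilateral symmetry of $P_3$---that reduces the six-branch description of $\psi_3$ to the two-branch doubling map on $\Rr/3\Zz$; every subsequent step is a classical consequence.
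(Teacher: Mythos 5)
Your proof is correct, and it takes a more explicit route than the paper's. The paper never computes $\psi_3$ on $[0,3]$: it only notes that both branches of the reduced map $\phi_3$ are full, deduces $F_3(I_s)=I_{s-1}\cup I_{s+1}$ for $s\in\Zz_3$, and concludes that $\psi_3$ is a Markov map whose $3\times 3$ transition matrix (on the fibers) is irreducible and aperiodic, at which point the standard theory for Markov piecewise expanding maps yields a unique mixing acip with full support. You instead push the conjugation of Lemma~\ref{lem:slapconjugation} all the way through and identify $\psi_3$ as the continuous degree-$2$ circle map $y\mapsto -2y \pmod 3$; your verification of the cancellation is right (for $x<1/2$ one gets $\phi_3(x)+s-1=-2(x+s)+3s$ and for $x>1/2$ one gets $\phi_3(x)+s+1\equiv -2(x+s)\pmod 3$), and it reflects the genuine geometric fact that the equilateral slap map is continuous even across vertices and midpoints. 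What your approach buys is strictly more information: the invariant density is explicitly constant (normalized Lebesgue), the system is Bernoulli rather than merely mixing, and the Markov partition has two full branches instead of six rectangles. What it costs is the explicit computation, which the paper's softer argument avoids; the paper's argument also generalizes directly to acute triangles (Case (a) of Theorem~\ref{th:triangle}), where no such clean closed form is available. One small point of care: uniqueness is not literally ``immediate from Theorem~\ref{th:folk}'' alone, which only bounds the number of ergodic acip's; you need, as you note parenthetically, the ergodicity of Lebesgue measure itself (from the Bernoulli isomorphism) together with mutual singularity of distinct ergodic acip's to rule out a second one. With that reading your argument is complete.
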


\begin{proof}
Note that $\phi_3([0,1/2^{-}])=\phi_3([1/2^+,1])=[0,1]$. Since $F_3(x,s)=(\phi_3(x),s+\delta(x))$
we have that $F_3(I_s)=I_{s-1}\cup I_{s+1}$ for every $s\in\mathbb{Z}_3$.
% where $I_s:=[0,1]\times\{s\}$. 
This implies that $\psi_{3}$ is Markov with irreducible and aperiodic transition matrix. The claim follows.
\end{proof}

\subsection{Regular pentagon}

\begin{theorem}\label{th:P5}
The slap map of the regular pentagon has a unique ergodic acip with two mixing components.
\end{theorem}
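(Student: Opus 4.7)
The plan is to analyze $\psi_5$ via its conjugate skew product $F_5$ from Lemma~\ref{lem:slapconjugation} and reduce the problem to showing ergodicity of a $\mathbb{Z}_5$-extension of a prime Lorenz map. Since $2^{1/4}<1/\beta_5\le 2^{1/2}$ we have $m(5)=1$, so Lemma~\ref{lem:varphin} and Corollary~\ref{co:lorenz} yield that $\phi_5$ has a unique ergodic acip $\nu_5$ whose support decomposes into exactly two mixing components $J_1$ and $\phi_5(J_1)$, with $\phi_5^2|_{J_1}$ exact. By Lemma~\ref{lem:muhat}, every acip of $F_5$ projects to $\nu_5$; since $F_5$ swaps $J_1\times\mathbb{Z}_5$ with $\phi_5(J_1)\times\mathbb{Z}_5$, the ergodic acips of $F_5$ are in bijection with those of $F_5^2|_{J_1\times\mathbb{Z}_5}$, and each yields an $F_5$-acip with exactly two mixing components. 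The problem thus reduces to proving ergodicity of $F_5^2|_{J_1\times\mathbb{Z}_5}$.

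By Lemma~\ref{lem:dk}(2), the restriction $F_5^2|_{J_1\times\mathbb{Z}_5}$ equals $(x,s)\mapsto(\phi_5^2(x),\,s-\delta(x))$, a $\mathbb{Z}_5$-extension of the exact base $\phi_5^2|_{J_1}$ by the piecewise constant cocycle $\alpha(x)=-\delta(x)$. For such abelian group extensions of exact piecewise expanding maps, ergodicity (which in the finite-group case implies exactness) is equivalent to $\alpha$ not being cohomologous to a cocycle valued in a proper subgroup of $\mathbb{Z}_5$; as $\mathbb{Z}_5$ is prime cyclic, this reduces to $\alpha$ not being a measurable coboundary. By the Livsic-type criterion for piecewise expanding maps with locally constant cocycles, it suffices to exhibit a single $\phi_5^2|_{J_1}$-periodic orbit whose $\alpha$-Birkhoff sum is nonzero modulo $5$.

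To locate such an orbit I rescale $J_1$ affinely to $[0,1]$; the conjugated map is the prime centrally symmetric Lorenz map $f_b$ of Theorem~\ref{th:lorenz} with $b=\beta_5^{-2}=6-2\sqrt 5\in(\sqrt 2,2]$, and $\alpha$ becomes $\tilde\alpha(y)=+1$ for $y<1/2$ and $\tilde\alpha(y)=-1$ for $y>1/2$. Writing the two affine branches of $f_b$ as $L_1(y)=by+1-b/2$ on $[0,1/2)$ and $L_2(y)=by-b/2$ on $(1/2,1]$, I seek a period-$5$ orbit with symbolic itinerary $(L,L,R,L,R)$. Solving $y_0=L_2L_1L_2L_1^2(y_0)$ gives
\[
y_0=\frac{b-b^2+b^3+b^4-b^5}{2(1-b^5)}\approx 0.008,
\]
and the successive iterates $y_1,y_2,y_3,y_4\approx 0.248,\,0.615,\,0.176,\,0.505$ can be checked directly to lie in the branches dictated by the itinerary. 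The Birkhoff sum of $\tilde\alpha$ along this orbit equals $1+1-1+1-1=1\not\equiv 0\pmod 5$.

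Hence $F_5^2|_{J_1\times\mathbb{Z}_5}$ is ergodic, and being a $\mathbb{Z}_5$-extension of an exact base it is in fact exact. Translating back through Lemma~\ref{lem:slapconjugation}, $\psi_5$ has a unique ergodic acip with exactly two mixing components, cyclically permuted by $\psi_5$. The main obstacle is the invocation of the non-coboundary criterion for measurable $\mathbb{Z}_5$-valued cocycles over the piecewise linear map $\phi_5^2|_{J_1}$; in our setting the cocycle is locally constant and the base is conjugate to a prime one-dimensional Markov map, so the criterion follows from standard Livsic-type results, but its precise invocation requires some care.
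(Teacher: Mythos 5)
Your route is genuinely different from the paper's. The paper works with the period-two orbit $\mathcal{O}_p$ of $\varphi_5^2$ inside $B\subset J_1$ and shows by a direct covering argument (using $\alpha_4|_B=0$ and $\alpha_2|_{J_1}=-\delta$ from Lemma~\ref{lem:dk}) that any mixing component meeting $\mathcal{O}_p\times\{s\}$ must contain every fibre $J_{1,s'}$, whence $H(\pi^{-1}(J_1))$ is a mixing component of every acip; this gives uniqueness and the mixing period $2$ in one stroke with no external machinery. You instead view $F_5^2|_{J_1\times\mathbb{Z}_5}$ as a $\mathbb{Z}_5$-extension of a prime Lorenz map and rule out coboundaries by a periodic-orbit obstruction. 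Your reductions are sound ($m(5)=1$, $b=\beta_5^{-2}=6-2\sqrt5\in(\sqrt2,2]$, the form of $\tilde\alpha$, the bijection between acips of $F_5$ and of $F_5^2|_{J_1\times\mathbb{Z}_5}$), and your period-$5$ orbit checks out: $y_0=(b-b^2+b^3+b^4-b^5)/(2(1-b^5))\approx0.0079$ does follow the itinerary $LLRLR$ (the tightest margin is $y_4\approx0.5052>1/2$, certifiable exactly in $\Qq(\sqrt5)$), and its $\tilde\alpha$-sum is $1$.

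Two points need attention. First, the parenthetical claim that ergodicity of a finite-group extension of an exact map implies exactness is false: take an exact base and the constant cocycle $\alpha\equiv1$ into $\mathbb{Z}_2$; the extension is ergodic but has eigenvalue $-1$. Since your count of mixing components of $\psi_5$ rests precisely on exactness of $F_5^2|_{J_1\times\mathbb{Z}_5}$, this is a genuine gap as written. It is repairable with the orbit you already have: any eigenvalue $\lambda$ arising in the $\chi_j$-isotypic component with $j\neq0$ satisfies $\lambda^5=1$ (raise the eigenfunction equation to the fifth power and use that $\alpha$ is integer-valued and the base is weakly mixing), and the periodic-orbit identity then forces $\chi_j(1)=\lambda^5=1$, a contradiction; so the same computation yields weak mixing, hence exactness via the spectral decomposition of Theorem~\ref{th:folk}. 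Second, the Livsic-type step --- that a merely measurable solution of $\chi_j(\alpha)=g\circ T/g$ forces the vanishing of periodic-orbit sums --- is not automatic for an a.e.-defined $g$: you must upgrade $g$ to a one-sidedly continuous version of bounded variation, e.g.\ by noting that $g\rho$ is a fixed point of the twisted transfer operator, which satisfies a Lasota--Yorke inequality because $\chi_j(\alpha)$ is locally constant, and that the invariant density $\rho$ of the prime Lorenz map is bounded below on $[0,1]$. You flag this and it is standard, but it is where the real work of your approach sits; the paper's elementary argument avoids it entirely.
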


First note that $m(5)=1$. By Lemma~\ref{lem:varphin}, $\varphi_5$ is $1$-time renormalizable. In fact, a simple computation shows that $\varphi_5$ is renormalizable on $J=[e,\varphi(e)]$ where $e=(3-\sqrt{5})/2$. 
%Thus, $\phi_5^2|_{J_1}$ is mixing and $\phi_5$ has two separated mixing components.
Since $\varphi_5^2|_{J}$ is an expanding Lorenz map, there exists a unique $b\in [e,1/2)$ such that $\varphi_5^2(b)=1/2$. In fact, $b=(9-\sqrt{5})/16$. By $\varphi$-symmetry, $\varphi_5^2(\varphi(b))=1/2$. Let $B=[b,\varphi(b)]$. It is easy to see that $\varphi_5^2$ has a unique periodic orbit $\mathcal{O}_p=\{p,\varphi(p)\}$ of period two contained in $B$ (see Fig.~\ref{fig:slapmap_pentagon}). 
\begin{figure}[t]
\subfloat[]{\includegraphics[scale=.45]{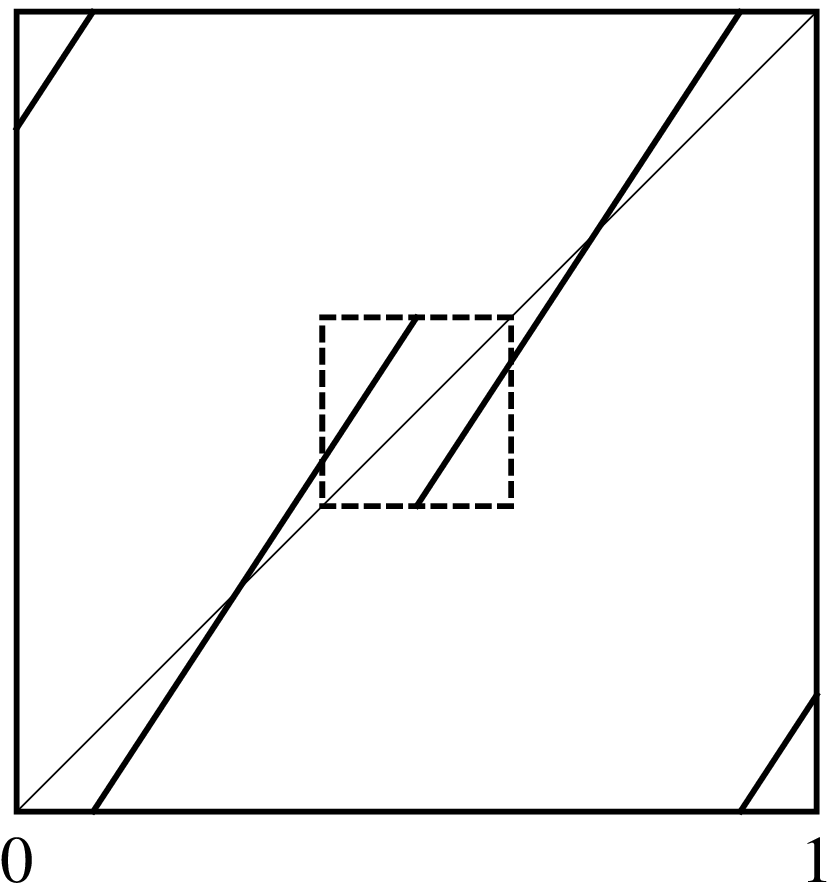}}
\hspace{1cm}
\subfloat[]{\includegraphics[scale=.45]{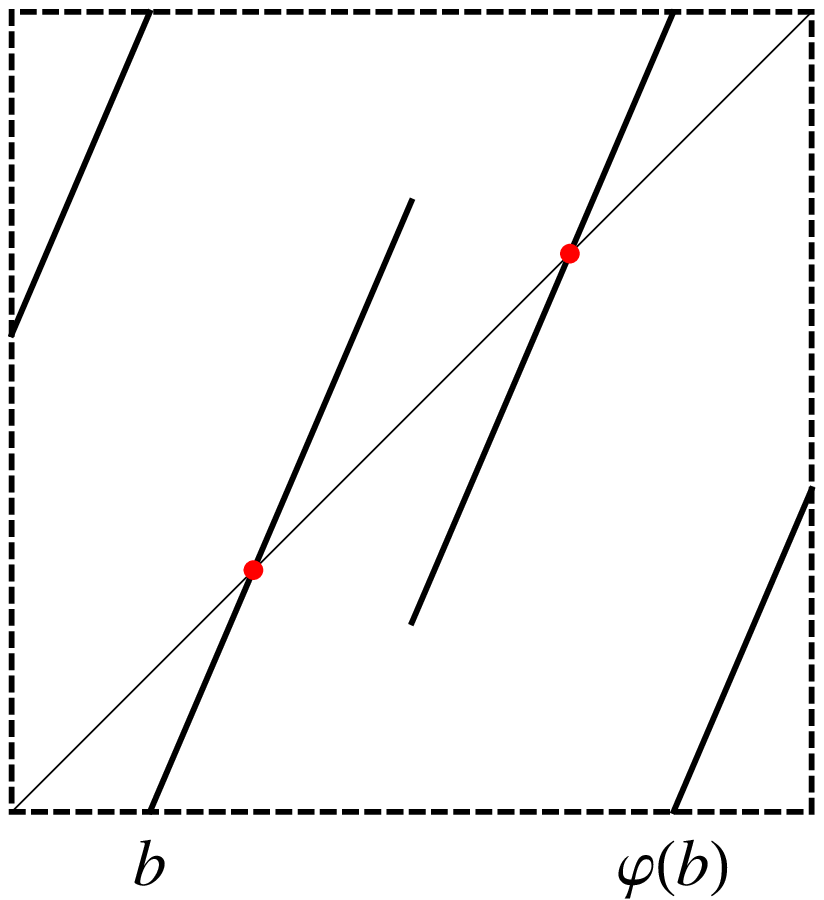}}
\caption{(A) Graph of $\varphi_5^2$. 
%in the interval $[0,1]$. 
The centered dashed square is $J_1^2$. (B) Graph of $\varphi_5^4$ on the interval $J_1$. 
The points represent the periodic orbit $\mathcal{O}_p$ of $\varphi_5^2$.}
%Magnification of the dashed square in (A) where the square in black is $B^2$ and the square in red represents the periodic orbit $\mathcal{O}_p$.
\label{fig:slapmap_pentagon}
\end{figure}

\begin{lemma}\label{lem:pentagon1}
For any interval $I\subset [0,1]$ such that $I\cap \mathcal{O}_p\neq\emptyset$, there exists an integer $n\geq1$ such that $J_s\subset F_5^{4n}(I_s)$ for every $s\in\mathbb{Z}_5$.
\end{lemma}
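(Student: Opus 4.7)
The plan is to exploit the fixed-point structure of $F_5^{4}$ at $(p,s)$ combined with the mixing of $\phi_5^{4}|_{J_1}$. First, I would reduce to the case $p \in I$ using the involution $T(x,s) = (\varphi(x),-s)$, which commutes with $F_5$ (a short check via $\phi_5 \circ \varphi = \varphi \circ \phi_5$ and $\alpha_n \circ \varphi = -\alpha_n$, the latter established in the proof of Lemma~\ref{lem:dk}). The case $\varphi(p) \in I$ then follows by applying this symmetry.

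Next, I would verify that $(p,s)$ is a fixed point of $F_5^{4}$ for every $s \in \mathbb{Z}_5$. Since $\mathcal{O}_p$ is a $\phi_5^{2}$-orbit of period two, $\phi_5^{4}(p) = p$; and, using that $\alpha_2 = -\delta$ on $J_1$ (from Lemma~\ref{lem:dk}, as $a_1 = -1$) together with $p < 1/2 < \varphi(p) = \phi_5^{2}(p)$, one computes
\[
\alpha_4(p) \;=\; \alpha_2(p) + \alpha_2(\phi_5^{2}(p)) \;=\; -\delta(p) - \delta(\varphi(p)) \;=\; 1 - 1 \;=\; 0.
\]
By piecewise constancy of $\delta$ near $p$ and $\varphi(p)$ (both distinct from $1/2$), there is an open subinterval $U \subset I$ containing $p$ on which $\alpha_4 \equiv 0$; thus $F_5^{4}|_{U \times \{s\}}$ reduces to $(x,s) \mapsto (\phi_5^{4}(x), s)$.

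I would then invoke mixing: since $m(5)=1$, Lemma~\ref{lem:varphin} combined with Theorem~\ref{th:GS} gives that $\phi_5^{2}|_{J_1} = \varphi_5^{2}|_{J_1}$ is a prime, and hence mixing, piecewise expanding Lorenz map; its iterate $\phi_5^{4}|_{J_1}$ is therefore also a mixing piecewise expanding interval map, and by the topological exactness of such maps there exists $N_0$ with $\phi_5^{4N}(U) \supset J_1 = J$ for every $N \geq N_0$.

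The hard part will be cocycle control beyond the first iterate. To obtain $J_s \subset F_5^{4N}(I_s)$, one needs, for each $y \in J$, a preimage $x \in I$ of $y$ under $\phi_5^{4N}$ satisfying $\alpha_{4N}(x) \equiv 0 \pmod 5$. I would argue that the $\phi_5^{4N}$-preimages of any $y$ lying in $U$ are exponentially many, and that their $\alpha_{4N}$-values equidistribute over $\mathbb{Z}_5$; this equidistribution follows once one verifies that $\alpha_4$ is not a measurable $\mathbb{Z}_5$-coboundary for $\phi_5^{4}|_{J_1}$. Because $5$ is prime and the values $\alpha_4(x) \in \{0,\pm 2\}$ already generate $\mathbb{Z}_5$, the non-coboundary property reduces to a finite check on short periodic orbits of $\phi_5^{4}|_{J_1}$. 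Alternatively, the explicit Markov-like combinatorics inherited from the prime Lorenz map $\phi_5^{2}|_{J_1}$ supply concrete itineraries in $U$ on which $\alpha_{4N}$ attains any prescribed residue in $\mathbb{Z}_5$, in particular $0$, yielding the desired $n = N$.
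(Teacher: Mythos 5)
Your opening steps (the symmetry reduction, the computation $\alpha_4(p)=-\delta(p)-\delta(\varphi(p))=0$, and the observation that $\alpha_4$ vanishes on a neighbourhood $U$ of $p$) are correct and consistent with the paper. The genuine gap is precisely at the point you yourself flag as ``the hard part'': the cocycle control beyond the regime where the orbit stays near $p$ is never established. Proving that the values $\alpha_{4N}$ over the $\phi_5^{4N}$-preimages of a point equidistribute (or even merely attain the residue $0$) modulo $5$ amounts to proving exactness/transitivity of the $\mathbb{Z}_5$-extension $F_5^4$ over $\pi^{-1}(J)$, which is essentially the content this lemma is meant to feed into (Lemma~\ref{lem:pentagon2} and Theorem~\ref{th:P5}); neither of your two proposed routes is carried out. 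The claim that non-coboundary ``reduces to a finite check on short periodic orbits'' is not justified (a Liv\v{s}ic-type periodic-orbit criterion does not by itself give equidistribution of preimage values at a fixed finite time), and the fact that $\{0,\pm2\}$ generates $\mathbb{Z}_5$ is necessary but nowhere near sufficient. The ``explicit itineraries'' alternative is a promissory note with no construction behind it.

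The paper avoids this difficulty entirely with an elementary structural fact that your argument misses: $\alpha_4$ vanishes not just on a small neighbourhood of $p$, but identically on the whole interval $B=[b,\varphi(b)]$, because $\alpha_4=-2\,\delta\cdot\chi_{J\setminus B}$ on $J$ and $\phi_5^2(B^{\pm})$ already covers enough to force this; consequently $F_5^4(B_s)=J_s$ for every $s\in\mathbb{Z}_5$. With this in hand no equidistribution is needed: one grows a small interval around the fixed point $p$ of $\phi_5^4$ \emph{inside} $B^-$, where the cocycle is identically zero, until its image is no longer contained in $B^-$, obtaining $B^-_s\subset F_5^{4j}(I'_s)$; the inequality $\phi_5^4(1/2^-)>\varphi(p)$ then lets one restart the same local argument at $\varphi(p)$ to capture $B^+$; and one final application of $F_5^4(B_s)=J_s$ yields $J_s\subset F_5^{4n}(I_s)$. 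I recommend replacing your step 4 by this computation of $\alpha_4$ on all of $B$ (via $\alpha_4(x)=\alpha_2(x)+\alpha_2(\phi_5^2(x))$ and Lemma~\ref{lem:dk}); your steps 1--3 can then be kept essentially as written, with the topological-exactness appeal replaced by the direct expansion argument inside $B$.
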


\begin{proof}

By Lemma~\ref{lem:dk}, $\alpha_2(x)=-\delta(x)$ for every $x\in J$. Using equation $\alpha_4(x)=\alpha_2(x)+\alpha_2(\phi_5^2(x))$ as in the proof of Lemma~\ref{lem:dk}, we get
$\alpha_4(x)=-2\delta(x)\chi_{J\setminus B}(x)$ for every $x\in J$.  
Thus, $\alpha_4(x)=0$ for every $x\in B$. This implies that \begin{equation}\label{eq:F54}
F_5^4(B_s)= J_s\quad \text{for every}\quad s\in\mathbb{Z}_5\,.
\end{equation}
Let $B^-=[b,1/2^-]$ and $B^+=\varphi(B^-)$. Note that $B=B^-\cup B^+$. Now let $I'=[p-\delta,p+\delta]$ and $\delta>0$ arbitrary small such that $I'\subset B^-$. Since $\phi_5^4$ is expanding and $p$ is a fixed point of $\phi_5^4$, $I'$ is a local unstable manifold of $p$. By iteration of the map $\phi_5^4$, the length of $\phi_5^{4n}(I')$ will increase until it is no longer strictly contained in $B^-$. Taking into account that $\alpha_4|_{B}=0$, we conclude that there exists an integer $j\geq1$ such that $B^-_s\subset F_5^{4j}(I'_s)$ for every $s\in\mathbb{Z}_5$. A simple computation shows that $\phi_5^4(1/2^{-})>\varphi(p)$. Thus $\varphi(p)\in\phi_5^{4}(B^-)$. So, we can repeat the same argument starting with any small interval containing $\varphi(p)$. Hence, there exists an integer $n\geq j$ such that 
$B_s\subset F_5^{4n}(I_s)$ for every $s\in\mathbb{Z}_5$ and
for every interval $I$ intersecting $\mathcal{O}_p$. By equation \eqref{eq:F54} we get the desired result.

\end{proof}

Let $\mu$ be an ergodic acip of the slap map $\psi_5$ and $\Lambda$ the support of $\mu$. By Theorem~\ref{th:folk},  we may decompose $\Lambda$ in mixing components,
$$
\Lambda=\Sigma_1\cup\cdots\cup\Sigma_k\,,
$$
such that $\psi_5^k|_{\Sigma_i}$ is mixing. Since $\phi_5^2|_{J}$ is mixing, the mixing period $k$ must be even. Let $H$ be the conjugation in Lemma~\ref{lem:slapconjugation}. 

\begin{lemma}\label{lem:pentagon2}
$H(\pi^{-1}(J))$ is a mixing component of $\mu$.
\end{lemma}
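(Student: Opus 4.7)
The plan is to check three properties of $A:=H(\pi^{-1}(J))$: $\psi_5^2$-invariance, positive $\mu$-measure, and exactness of $\psi_5^2|_A$. Granting these, the uniqueness of the spectral decomposition in Theorem~\ref{th:folk}, together with the observation that $\psi_5(A)\ne A$, forces $A$ to equal (mod $\mu$) one of the mixing components $\Sigma_i$, and shows incidentally that the mixing period $k$ of $\mu$ is $2$.

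Invariance and positive measure are direct. Lemma~\ref{lem:dk}(1) applied with $k=1$ gives $F_5^2(\pi^{-1}(J))=\pi^{-1}(J)$, and hence $\psi_5^2(A)=A$. By Lemma~\ref{lem:muhat}, $\pi_\ast\hat\mu=\nu_5$; since $\phi_5^2|_J$ is mixing by Lemma~\ref{lem:varphin}(3), the interval $J$ lies in the support of $\nu_5$, and so $\mu(A)=\hat\mu(\pi^{-1}(J))=\nu_5(J)>0$.

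The main task is to show that $F_5^2$ restricted to $\pi^{-1}(J)$ is exact. Let $U\subset\pi^{-1}(J)$ have positive $\hat\mu$-measure. Since $\phi_5^2|_J$ is exact, for some $m$ the projection $\pi(F_5^{2m}(U))$ contains an interval meeting $\mathcal{O}_p$. The fiber cocycle $\alpha_2(x)=-\delta(x)$ shifts the $\Zz_5$-coordinate by $\pm1$ at each iterate; because $\pm 1$ generates $\Zz_5$, continued iteration combined with the base expansion should yield an integer $N$ and an interval $I$ meeting $\mathcal{O}_p$ with $F_5^{2N}(U)\supset I\times\Zz_5$. Lemma~\ref{lem:pentagon1} then supplies $n$ such that
\[
F_5^{2N+4n}(U)\;\supset\;\bigcup_{s\in\Zz_5}J_s\;=\;\pi^{-1}(J),
\]
which establishes the exactness of $(F_5^2|_{\pi^{-1}(J)},\hat\mu|_{\pi^{-1}(J)})$ and hence of $(\psi_5^2|_A,\mu|_A)$.

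For the final step, $\psi_5(A)=H(\pi^{-1}(\phi_5(J)))$ is disjoint from $A$ because $\phi_5(J)\cap J=\emptyset$: the renormalization of $\varphi_5$ on $J$ forces $\varphi_5(J)\cap J=\emptyset$, and since $J$ is $\varphi$-invariant (centrally symmetric) and $\phi_5=\varphi\circ\varphi_5$, the disjointness transfers to $\phi_5(J)$. Exactness of $\psi_5^2|_A$ rules out any nontrivial $\psi_5^2$-sub-decomposition of $A$, so by the uniqueness in Theorem~\ref{th:folk} the set $A$ must coincide with a single mixing component $\Sigma_i$. The main obstacle is the fiber-mixing step, namely showing that $F_5^{2N}(U)$ actually contains a set of the form $I\times\Zz_5$: one needs to combine the $\pm1$ generator of the cocycle with exactness in the base, using a skew-product transitivity argument to upgrade base-exactness to exactness of the extension.
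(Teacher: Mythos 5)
Your overall strategy (verify directly that $A=H(\pi^{-1}(J))$ is $\psi_5^2$-invariant, of positive measure, and exact, then invoke uniqueness of the spectral decomposition) is a legitimate alternative to the paper's route, which instead starts from an arbitrary mixing component $\Sigma$ meeting $\mathcal{O}_p$ and shows it must swallow all of $\pi^{-1}(J)$. But as written your proof has a genuine gap, and you name it yourself: the ``fiber-mixing step'', i.e.\ the claim that for a positive-measure set $U\subset\pi^{-1}(J)$ some iterate $F_5^{2N}(U)$ contains a product set $I\times\Zz_5$ with $I\cap\mathcal{O}_p\neq\emptyset$. This is not a technical detail to be waved through with ``should yield'': it is precisely the content of the lemma. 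For $d\ge 7$ the analogous cocycle vanishes on the renormalization interval and the fibers decouple, producing $d$ ergodic components; what distinguishes the pentagon is exactly that the $\pm1$ drift forces communication between all five fibers. Leaving that step as ``the main obstacle'' means the heart of the argument is missing. A secondary (fixable) imprecision: exactness of $\phi_5^2|_J$ gives $\nu_5(\phi_5^{2m}(\pi(U)))\to 1$, not that the image \emph{contains an interval}; to produce an interval you need a density-point/expansion argument inside a single fiber.

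The missing step can be closed, but not along the line you sketch. You do not need to first manufacture a full product set $I\times\Zz_5$ (which is the hardest possible intermediate target); Lemma~\ref{lem:pentagon1} already operates fiberwise, giving $J_s\subset F_5^{4n}(I_s)$ for a \emph{single} $s$ once $I$ meets $\mathcal{O}_p$. So the efficient route --- and the one the paper takes --- is: produce one interval $I_s\subset F_5^{2m}(U)$ with $I\cap\mathcal{O}_p\neq\emptyset$, apply Lemma~\ref{lem:pentagon1} to get the whole fiber $J_s$, then use $\alpha_2=-\delta$ (Lemma~\ref{lem:dk}) to see that $F_5^2(J_s)$ meets $\mathcal{O}_p\times\{s\pm1\}$, apply Lemma~\ref{lem:pentagon1} again to capture $J_{s\pm1}$, and iterate until all of $\pi^{-1}(J)$ is covered (here the fact that $\pm1$ generates $\Zz_5$ enters). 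With that propagation argument spelled out, your invariance, positive-measure, and disjointness observations (including the correct computation that $\phi_5(J)\cap J=\emptyset$, which gives mixing period $2$) would complete a valid proof.
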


\begin{proof} By Lemma~\ref{lem:muhat}, there exists a mixing component $\Sigma$ of $\mu$ which satisfies $H^{-1}(\Sigma)\cap \mathcal{O}_p\neq\emptyset$. Note that $\Sigma$ is a union of finitely many intervals. This fact follows from (2) of Theorem~\ref{th:folk} applied to the map $\psi^k|_{\Lambda}$. Thus, we can decompose $H^{-1}(\Sigma)$ as follows 
$$
H^{-1}(\Sigma)=I_{1,s_1}\cup\cdots\cup I_{r,s_r}
$$ 
where $I_{j,s_j}=I_j\times\{s_j\}$, $s_j\in\mathbb{Z}_5$ and $I_j$ is a subinterval of $[0,1]$. By a previous observation we conclude that $I_j\cap \mathcal{O}_p\neq\emptyset$ for some $1\leq j\leq r$. Applying Lemma~\ref{lem:pentagon1} we get $J_{s_j}\subset H^{-1}(\Sigma)$. Now, by Lemma~\ref{lem:dk} we have  $\alpha_2(x)=-\delta(x)$ for every $x\in J$. Thus, 
$$
F_5^2(J_{s_{i,j}})\cap \left(\mathcal{O}_p\times\{s_{i,j}\pm1\}\right)\neq\emptyset\,.
$$
Applying again Lemma~\ref{lem:pentagon1}, we obtain $J_{s_{i,j}+i}\subset H^{-1}(\Sigma)$ for $i=-1,0,1$. Repeating the argument for $J_{s_{i,j}\pm 1}$ we conclude that $\pi^{-1}(J)\subset H^{-1}(\Sigma)$. By (1) of Lemma~\ref{lem:dk}, $F_5^k(\pi^{-1}(J))=\pi^{-1}(J)$. So we get equality and the proof is complete.

\end{proof}

\begin{proof}[Proof of Theorem~\ref{th:P5}]

By Lemma~\ref{lem:pentagon2}, any ergodic acip $\mu$ has a mixing component $\Sigma$ which is equal to $H(\pi^{-1}(J))$. Therefore, the slap map $\psi_5$ has at most one ergodic acip. Finally, since $F_5^2(\pi^{-1}(J_1))=\pi^{-1}(J_1)$, we conclude that $\psi_5$ has mixing period two.
%In order to show that $\Phi_5$ is ergodic it is sufficient to prove that $\hat{\mu}$ is always a product measure, i.e. $\hat{\mu}=\nu_5\times\lambda_5$ where $\nu_5$ is the acip of $\phi_5$ and $\lambda_5$ is the uniform probability measure on $\mathbb{Z}_5$. The proof follows from:
%\begin{enumerate}
%  \item By Lemma~\ref{lem:varphin}, 
%$$
%\varphi_5(x)=\frac{4}{1 + \sqrt{5}}(x-1/2)\pmod{1}\,.
%$$ 
%A simple computation shows that $\varphi_5$ is renormalizable on $J_1=[1-\eta,\eta]$ where $\eta=(\sqrt{5}-1)/2$ and $\varphi_5^{2}|_{J_1}$ is prime. Let $g=\varphi_5^{2}|_{J_1}$. Then $g^2$ has Thus, the support of $\nu_5$ contains the interval $J_1$.
%	\item By Lemma~\ref{lem:muhat}, $\pi_*\hat{\mu}=\nu_5$. Thus, the support of $\hat{\mu}$ contains 
%\end{enumerate}
%
%By Lemma~\ref{lem:dk}, $F_5^2(x,s)=(\phi_5^2(x),s-\delta(x))$ for every $x\in J_1$ and $s\in\mathbb{Z}_5$.

\end{proof}

\subsection{Regular polygon $P_d$ with odd $d\geq7$}

%\begin{figure}[t]
% \begin{center}
% \includegraphics*[scale=.5]{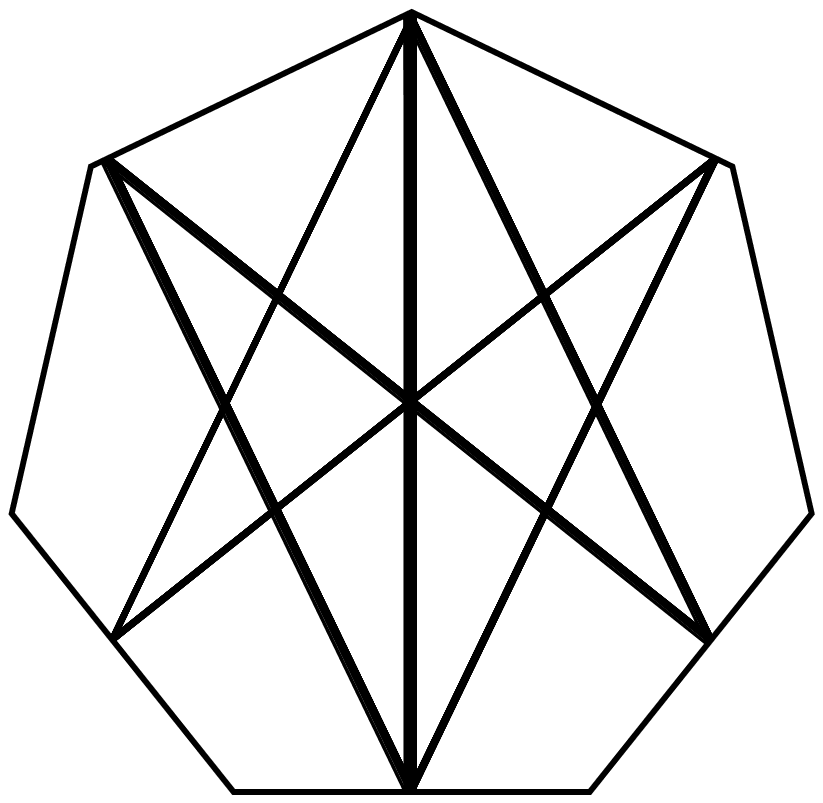} 
% \end{center}
% \caption{One ergodic component of the slap map on the heptagon.}
% \label{fig:7gon}
%\end{figure}

\begin{figure}[t]
\subfloat[]{\includegraphics[scale=.55]{7gon.eps}}
\hspace{1cm}
\subfloat[]{\includegraphics[scale=.5]{}}
\caption{(A) One ergodic component of the slap map on the heptagon. (B) Graph of $\phi_7$ and an orbit on the support of $\nu_7$.}
\label{fig:7gon}
\end{figure}

\begin{theorem}\label{th:Pd7}
If $d\geq7$ and $d$ is odd then the slap map of $P_d$ has $d$ ergodic acips with $2^{m(d)}$ mixing components. 
\end{theorem}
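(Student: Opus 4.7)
The strategy mimics the pentagon case but exploits the fact that for $d\ge 7$ odd one has $m:=m(d)\ge 2$, so Lemma~\ref{lem:dk}(2) yields $a_m=0$, i.e.\ $\alpha_{2^m}\equiv 0$ on $J_m$. Consequently $F_d^{2^m}$ preserves every fiber $(J_m)_s:=J_m\times\{s\}$ of $\pi^{-1}(J_m)$ and acts on it through $\pi$ as $\phi_d^{2^m}|_{J_m}$, which is mixing by Lemma~\ref{lem:varphin}(3). The unique ergodic acip carried by $(J_m)_s$ is therefore $\hat\nu_s:=\tilde\nu_d\otimes\delta_s$, where $\tilde\nu_d:=\nu_d|_{J_m}/\nu_d(J_m)$. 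This fiberwise mixing is the decisive feature: the pentagon's gluing mechanism ($a_1=-1\ne 0$) is absent here, and instead each of the $d$ fibers will contribute its own ergodic component.

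To produce these components, I would define
\[
\hat\mu_s:=\frac{1}{2^m}\sum_{j=0}^{2^m-1}(F_d^j)_\ast\hat\nu_s,\qquad s\in\Zz_d,
\]
and show that $\{H_\ast\hat\mu_s\}_{s\in\Zz_d}$ is a family of $d$ pairwise distinct ergodic acip's of $\psi_d$, each with $2^{m(d)}$ mixing components. Since $\pi\bigl(\mathrm{supp}(F_d^j)_\ast\hat\nu_s\bigr)\subseteq\phi_d^j(J_m)$ and $\phi_d^j(J_m)$ for $0\le j<2^m$ are the $2^m$ disjoint mixing components of $\nu_d$ cyclically permuted by $\phi_d$, the summands have pairwise disjoint supports; the equality $a_m=0$ ensures the orbit closes after $2^m$ steps, so $\hat\mu_s$ is a probability. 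Ergodicity and mixing period $2^m$ follow from the mixing of $F_d^{2^m}$ on the $j=0$ summand, and absolute continuity is inherited from $\tilde\nu_d$ together with the nonsingularity of $F_d$. Distinctness is immediate: only the $j=0$ term contributes to $\hat\mu_s|_{\pi^{-1}(J_m)}$, giving $2^{-m}\hat\nu_s$, from which $s$ is recovered.

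For exhaustion, I would take an arbitrary ergodic acip $\hat\mu$ of $F_d$. By Lemma~\ref{lem:muhat}, $\pi_\ast\hat\mu=\nu_d$, so the cycle of mixing components of $\hat\mu$ must include some $\hat\Sigma\subseteq\pi^{-1}(J_m)$; its period $k$ is a multiple of $2^m$ since $\pi$ factors this cycle onto the $2^m$-cycle of mixing components of $\nu_d$. Hence $F_d^k=(F_d^{2^m})^{k/2^m}$ preserves each fiber of $\pi^{-1}(J_m)$, and together with the $F_d^k$-mixing of $\hat\mu$ on $\hat\Sigma$ this forces $\hat\Sigma$ to coincide modulo null sets with a single fiber $(J_m)_s$, whence $\hat\mu=\hat\mu_s$ by $F_d$-invariance. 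Combined with the upper bound $j\le d$ from Corollary~\ref{co:acipslap}, this yields exactly $d$ ergodic acip's, each with $2^{m(d)}$ mixing components.

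The main obstacle I expect is the final exhaustion step, specifically showing rigorously that $\hat\Sigma$ reduces to a single fiber. It rests on two structural observations that are clean individually but must be combined carefully: the mixing period of $\hat\mu$ is a multiple of $2^m$ (a factor-map argument using that $(\phi_d,\nu_d)$ has mixing period $2^m$), and $F_d^k$-mixing of an absolutely continuous measure on a fiberwise-preserved subset of $\pi^{-1}(J_m)$ can only be supported on one fiber.
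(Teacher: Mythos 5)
Your proposal is correct and follows essentially the same route as the paper: both hinge on $m(d)\ge 2$ forcing $\alpha_{2^m}\equiv 0$ on $J_m$ (Lemma~\ref{lem:dk}), so that $F_d^{2^m}$ preserves each fiber $J_m\times\{s\}$ and acts there as the mixing map $\phi_d^{2^m}|_{J_m}$, yielding $d$ distinct ergodic acip's with $2^{m(d)}$ mixing components, matched against the upper bound of $d$ from Corollary~\ref{co:acipslap}. The paper states this in a few lines; your explicit construction of the measures $\hat\mu_s$ and the final exhaustion step are additional detail rather than a different method (and the exhaustion step is in fact redundant once the upper bound is invoked).
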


\begin{proof}
When $d\geq7$ and $d$ is odd we have $m(d)\geq 2$. Since, by (2) of Lemma~\ref{lem:dk}, $\alpha_{2^m}(x)=0$ for every $x\in J_m$ we conclude that 
\begin{equation}\label{eq:F7}
F_d^{2^m}(x,s)= (\phi_d^{2^m}(x),s)\,,\quad\forall\,(x,s)\in J_m\times\mathbb{Z}_d\,.
\end{equation}
%$$
%F_d^{2^m}(J_m\times\{s\})= J_m\times\{s\}\,,\quad\forall\,s\in\mathbb{Z}_d\,.
%$$
Therefore, the slap map has at least $d$ ergodic acips. However, the number of ergodic acips cannot be greater than $d$, the number of points of discontinuity of $\psi_d$. 
Thus, $\psi_d$ has exactly $d$ ergodic acips. Moreover, since $\phi_{d}$ has $2^m$ mixing components, by \eqref{eq:F7} the same is true for the slap map (see Fig.~\ref{fig:7gon}). 

\end{proof}

\section{Triangles}
\label{sec:triangles}
%!TEX root = ergodic_main.tex

In this section we prove that the slap map of every triangle has a unique ergodic acip. 

Let $\Delta$ denote a triangle.

\begin{theorem}\label{th:triangle}
The slap map $\psi_\Delta$ has a unique ergodic acip. Moreover, if $\Delta$ is acute then $\psi_\Delta$ is mixing, and its acip is supported on the whole interval. Otherwise, $\psi_\Delta$ has a even number of mixing components. 
\end{theorem}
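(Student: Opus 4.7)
Since a triangle has no pair of parallel sides, $\psi_\Delta$ is piecewise affine expanding with three discontinuities, so by Corollary~\ref{co:acipslap} it admits at most three ergodic acips. The plan is to separate the acute and non-acute cases and, in each, build a Markov-type partition of the boundary of $\Delta$ determined by the feet of the three altitudes. The key geometric remark is that on each side $S_i$, the normal passing through the opposite vertex $V_i$ meets $S_i$ precisely at the foot of the altitude from $V_i$, and this foot is interior to $S_i$ iff both angles of $\Delta$ adjacent to $S_i$ are strictly acute. Hence in the acute case all three sides are subdivided, while if $\Delta$ is non-acute at a vertex $B$, only the side $b$ opposite $B$ is subdivided, and points of $I_a, I_c$ project normally into $I_b$.

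For the acute case I would verify that the six resulting sub-intervals form a Markov partition: on each sub-interval $\psi_\Delta$ is affine and its image is the entire opposite side, which is itself a union of two sub-intervals. The induced $6\times 6$ transition matrix has exactly two ones per row, and the reduced graph on the three sides is the complete directed graph on three vertices with no loops. This graph is strongly connected and carries both $2$-cycles (e.g.\ $S_1\to S_3\to S_1$) and $3$-cycles (e.g.\ $S_1\to S_2\to S_3\to S_1$), and these lift to the $6\times 6$ matrix, which is therefore primitive. Combining Theorem~\ref{th:folk} with standard spectral theory for Markov piecewise expanding maps yields a unique ergodic acip, which is mixing with support equal to the union of the Markov pieces, i.e.\ $[0, L]$.

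For the non-acute case, with $B$ the non-acute vertex and $b$ its opposite side, one checks directly that $\psi_\Delta(I_a), \psi_\Delta(I_c)\subset I_b$ and $\psi_\Delta(I_b)=I_a\cup I_c$. Hence $\psi_\Delta$ interchanges $I_b$ and $I_a\cup I_c$, $\psi_\Delta^2$ preserves each, and every ergodic acip of $\psi_\Delta$ has an even number of mixing components. For the uniqueness part I would analyze the induced map $\psi_\Delta^2|_{I_b}$: it is piecewise affine expanding with two branches $I_b^A, I_b^C$ determined by the altitude foot of $B$, the branch images being exactly the sub-intervals $\psi_\Delta(I_c)$ and $\psi_\Delta(I_a)$ of $I_b$. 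A direct coordinate computation (using the coordinates with $S_1$ on the $x$-axis and the obtuse vertex at the origin) shows that these two sub-intervals together cover $I_b$ and overlap across the altitude foot. Combined with the expanding property, this forces $\psi_\Delta^2|_{I_b}$ to be topologically exact on $I_b$, so by Theorem~\ref{th:folk} (applied to $\psi_\Delta^2|_{I_b}$) it has a unique acip; lifting back via $\psi_\Delta$ gives the unique ergodic acip of $\psi_\Delta$ with even mixing period.

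The main obstacle is the non-acute case, and specifically the verification of topological exactness of $\psi_\Delta^2|_{I_b}$. The $2$-periodic structure and the resulting evenness of the mixing period are immediate, but ruling out a non-trivial invariant subset of $I_b$ reduces to the geometric inequality $|\psi_\Delta(I_a)|+|\psi_\Delta(I_c)|>|I_b|$, which must be extracted from trigonometric identities in a non-acute triangle; equivalently, one must show that each of the two branch images strictly crosses the altitude foot from $B$. Once this geometric fact is established, the remaining step — that a piecewise expanding map with two expanding branches whose images cover the whole interval and overlap is uniquely ergodic — is a routine application of the Lasota--Yorke / bounded-variation machinery underlying Theorem~\ref{th:folk}.
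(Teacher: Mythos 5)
Your acute case is essentially the paper's argument (a Markov map with irreducible, aperiodic transition matrix, as in Theorem~\ref{th:P3}), and the period-two structure $\psi_\Delta(I_a\cup I_c)\subset I_b$, $\psi_\Delta(I_b)\subset I_a\cup I_c$ in the non-acute case also matches the paper. The gap is in your uniqueness argument for the non-acute case: the covering-plus-overlap property you identify does \emph{not} force $\psi_\Delta^2|_{I_b}$ to be topologically exact, and in fact topological exactness on all of $I_b$ fails for every strictly obtuse triangle. Parametrize $I_b=[C,A]$ with the foot $H$ of the altitude from the obtuse vertex $B$ as the discontinuity. Both branches of $\psi_\Delta^2|_{I_b}$ are increasing affine maps fixing the respective endpoint, with images $[C,B']$ and $[B'',A]$ where $B''<H<B'$ (so your overlap inequality is correct). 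But on $[H,A)$ the second branch satisfies $\psi_\Delta^2(x)<x$, whence $\psi_\Delta^2([C,B'])=[C,B']\cup[B'',\psi_\Delta^2(B')]=[C,B']$: the interval $[C,B']$ is a proper forward-invariant subinterval (one checks $|CB'|=a/\cos C<b$ whenever the angle at $B$ is strictly obtuse). So ``two overlapping expanding branches covering the interval'' does not imply exactness, and indeed the acip of an obtuse slap map need not be supported on all of $I_b$ --- consistent with the theorem, which claims full support only in the acute case.

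The repair is much cheaper and is what the paper does: $\psi_\Delta^2|_{I_b}$ and $\psi_\Delta^2|_{I_a\cup I_c}$ are piecewise affine expanding maps with a \emph{single} discontinuity point, so item (1) of Theorem~\ref{th:acip} (the bound of the number of ergodic acips by the number of branches minus one) already gives each of them a unique ergodic acip; no geometric covering estimate or exactness claim is needed. Your final ``lifting back'' step also needs to be made precise: the paper does this by writing an arbitrary $\psi_\Delta$-invariant set as $J=A\cup B$ with $A\subset I_b$ and $B\subset I_a\cup I_c$, observing that invariance of $I_b$ and $I_a\cup I_c$ under $\psi_\Delta^2$ forces $A$ and $B$ to be separately $\psi_\Delta^2$-invariant, and then invoking ergodicity of the two restricted maps to conclude $\mu(J)\in\{0,1\}$. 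With these two substitutions your outline becomes the paper's proof.
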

\begin{proof}We split the proof in two cases: (a) $ \Delta $ acute, and (a) $ \Delta $ not acute.

%\begin{enumerate}
%\item[(a)] 
Case (a). Arguing as in the proof of Theorem~\ref{th:P3}, we see that the slap map of an acute triangle is a Markov map with irreducible and aperiodic transition matrix. Therefore, $\psi_\Delta$ has a unique mixing acip.

%\item[(b)] 
Case (b). Let $ I_{0},I_{1},I_{2} $ be the sides of $\Delta$. We assume that $ I_{0} $ is the longest side. The sets $ I_{0} $ and $ I_{1} \cup I_{2} $ are both forward invariant under the second iterate of the slap map. Moreover, $ \psi_\Delta^{2}|_{I_{0}} $ and $ \psi_\Delta^{2}|_{I_{1} \cup I_{2}} $ are both piecewise affine expanding maps with a single point of discontinuity. So, by Theorem~\ref{th:acip}, $ \psi_\Delta^{2}|_{I_{0}} $ and $ \psi_\Delta^{2}|_{I_{1} \cup I_{2}} $ have a unique ergodic acip, $\nu_0$ and $\nu_{12}$, respectively. Let $\mu$ be an acip of the slap map $\psi_\Delta$. Clearly $\mu_{I_0}=\nu_0$ and $\mu_{I_1\cup I_2}=\nu_{12}$, by uniqueness. To prove that $\mu$ is ergodic we argue as follows. Let $ J $ be an invariant subset of $ I_{0} \cup I_{1} \cup I_{2} $ for $ \psi_{\Delta}$. We can write $ J = A \cup B $ with $ A \subset I_{0} $ and $ B \subset I_{1} \cup I_{2} $. Then $ \psi_\Delta^{-2}(A) \cup \psi_\Delta^{-2}(B) = \psi_\Delta^{-2}(J) = J = A \cup B $. Since $ I_{0} $ and $ I_{1} \cup I_{2} $ are invariant under $ \psi_\Delta^{2} $, it follows that $ \psi_\Delta^{-2}(A) = A $ and $ \psi_\Delta^{-2}(B) = B $. But $ \psi_\Delta^{2}|_{I_{0}} $ and $ \psi_\Delta^{2}|_{I_{1} \cup I_{2}} $ are ergodic so that $\mu_{I_0}(A)=\mu_{I_1\cup I_2}(B)=1$. Thus $ \mu(J) = 1 $, which proves that $ \psi_{\Delta} $ is ergodic but not mixing with respect to $ \mu $. From previous observation, the number of mixing components has to be even. 
%		{\bf To study the support of $ \mu $, we need to find the attractor of $ \Phi^{2}_{0}|_{I_{0}} $ and $ \Phi^{2}_{0}|_{I_{1} \cup I_{2}} $, which seems to be a proper subinterval of their domains, being the maps Lorenz.}
%\end{enumerate}
\end{proof}

\begin{remark}
When $\Delta$ is a right triangle, the slap map $\psi_\Delta$ is a Markov map with a transition matrix that is irreducible and periodic of period two. Thus, $\psi_\Delta$ has two mixing components. 
\end{remark}

\section{More examples of non-ergodic slap maps}
\label{sec:non ergodic}
%!TEX root = slap_main.tex
In the previous section we proved that every regular polygon with an odd number $ d \ge 7 $ of sides has exactly $ d $ ergodic acip's. In this section we provide more examples of polygons whose slap map has more than one ergodic acip's. For brevity, we will refer to these polygons as `non-ergodic polygons'. 

It is quite easy to construct examples of non-ergodic polygons with more than five sides. For example, by intersecting two obtuse triangles, we obtain an hexagon with two ergodic acip's of even mixing period. This can be inferred immediately from Fig.~\ref{non:ergodic:hexagon}. The same construction permits to obtain a more general result.
\begin{figure}[h]
\begin{center}
\includegraphics*[trim=20mm 50mm 20mm 50mm, clip, width=5.5cm]{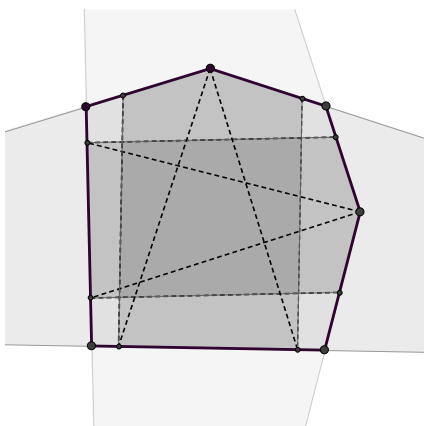} 
\end{center}
\caption{A non-ergodic hexagon.}
\label{non:ergodic:hexagon}
\end{figure} 
%\begin{figure}[h]
%\begin{center}
%\adjustbox{trim={.08\width} {.2\height} {.1\width} {.2\height},clip}%
%  {\includegraphics[width=5.5cm]{non.eps}}
%\end{center}
%\caption{A non-ergodic hexagon.}
%\label{non:ergodic:hexagon}
%\end{figure} 
 
%Non-ergodic examples are easy to construct. For instance, by intersecting two obtuse triangles one can obtain an hexagon with two
%ergodic components of even mixing period. See Figure~\ref{non:ergodic:hexagon}. This construction can be easily generalized.

\begin{proposition}
For every integer $n\geq2$ there exists a convex $3n$-gon whose slap map has $n$ ergodic components with even mixing period. 
\end{proposition}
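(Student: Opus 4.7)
The plan is to generalize the hexagon construction of Figure~\ref{non:ergodic:hexagon}, which arises as the intersection of two obtuse triangles. I would construct $n$ congruent obtuse triangles $T_1,\ldots,T_n$, arranged in a rotationally symmetric star-like configuration about a common central point $O$ (for instance, let $T$ be a fixed isoceles obtuse triangle and set $T_k$ to be the rotation of $T$ by $\pi k/n$ about $O$), so that $P:=\bigcap_{k=1}^{n}T_k$ is a convex $3n$-gon in which each $T_k$ contributes exactly three arcs to $\partial P$, one from each of its sides. I would denote by $G_k\subset \partial P$ the union of these three arcs.

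The central geometric fact to establish is the invariance of the partition $\partial P = G_1\sqcup\cdots\sqcup G_n$ under $\psi_P$: for every $s\in G_k$, the inward perpendicular ray from $s$ first meets $\partial P$ at a point of $G_k$. Since the normal direction at $s$ depends only on the side of $T_k$ containing $s$, this is equivalent to saying that the perpendicular chord of $T_k$ drawn from $s$ is contained in every $T_j$ with $j\neq k$. I would ensure this by taking the triangles sufficiently thin (e.g.\ with apex angle close to $\pi$, so that the perpendicular chords of each $T_k$ are short), and by placing $O$ so that near each side of $T_k$ the intersection $\bigcap_{j\neq k}T_j$ contains a thick neighborhood of that side. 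I expect this to be the main obstacle of the proof: it is an elementary but delicate quantitative geometric verification, comparing chord lengths inside $T_k$ with the widths of the other $T_j$'s.

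Granted this invariance, $\psi_P|_{G_k}$ coincides with $\psi_{T_k}$ restricted to the $\psi_{T_k}^2$-invariant set $G_k\subset \partial T_k$, because the normal ray from $s\in G_k$ is the same whether computed with respect to $\partial P$ or to $\partial T_k$. I would then transpose verbatim the argument of Theorem~\ref{th:triangle}, case (b), to each $G_k$: since $T_k$ is obtuse, $\psi_{T_k}^2$ preserves the longest side of $T_k$ and the union of the other two sides; correspondingly, $\psi_P^2$ preserves the arc of $G_k$ lying on the longest side of $T_k$ and the union of the two arcs on the other two sides. Each of these two $\psi_P^2$-invariant sets is an interval on which $\psi_P^2$ is piecewise expanding with a single discontinuity, so by Theorem~\ref{th:acip} it admits a unique ergodic acip. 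Combining the two as in the proof of Theorem~\ref{th:triangle} gives a unique ergodic acip for $\psi_P|_{G_k}$ with an even mixing period.

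Summing over $k=1,\ldots,n$, the slap map $\psi_P$ decomposes into $n$ invariant subsystems, each carrying a unique ergodic acip of even mixing period, so $\psi_P$ has exactly $n$ ergodic acips with even mixing period, as required.
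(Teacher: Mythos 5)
Your strategy is exactly the one the paper has in mind (the paper itself only gestures at it: it exhibits the $n=2$ case as the intersection of two obtuse triangles in Fig.~\ref{non:ergodic:hexagon} and asserts that ``the same construction'' generalizes), and the dynamical half of your argument is sound: once you know that each $G_k$ is forward invariant under $\psi_P$ and that $\psi_P|_{G_k}=\psi_{T_k}|_{G_k}$, the transposition of case (b) of Theorem~\ref{th:triangle} to the two $\psi_P^2$-invariant sub-arcs of $G_k$ does give a unique ergodic acip of even mixing period on each $G_k$, and the invariance of the partition forces every ergodic acip to charge exactly one $G_k$, so the count is $n$. The gap is precisely where you say it is, but it is a genuine gap, not just a tedious verification you may defer: you have not produced any configuration for which the invariance $\psi_P(G_k)\subseteq G_k$ actually holds, and the mechanism you propose (``take the triangles sufficiently thin'') does not by itself deliver it for $n\ge 3$. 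Thinning the triangles shrinks the perpendicular chords of $T_k$ and the widths of the other slivers $T_j$ at the same rate, so no separation of scales is created; and the feature that makes the hexagon case work --- that the chords of $T_1$, being perpendicular to its long side, run parallel to the long axis of $T_2$ and hence stay inside it for free --- is lost as soon as $n\ge 3$, where a chord of $T_k$ meets the axis of $T_j$ at angle $\pi/2-\pi(j-k)/n$ and can traverse the full width of $T_j$.

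Two remarks that would make the verification feasible. First, you are making the condition harder than it needs to be: since each $T_j$ is convex and $s\in P\subseteq T_j$, the segment $[s,\psi_{T_k}(s)]$ is contained in $T_j$ as soon as its \emph{endpoint} $\psi_{T_k}(s)$ is, and an inward segment from $s\in\partial P$ with far endpoint in the convex set $P$ has its interior in the interior of $P$. So the whole invariance claim reduces to the single inclusion $\psi_{T_k}(G_k)\subseteq G_k$, i.e.\ to locating the endpoints of the three arcs $G_k\cap I_i^{(k)}$ and checking that each arc's slap image (an explicit affine image) lands in the appropriate arc --- finitely many explicit inequalities in the parameters of the family, with no quantitative comparison of chord lengths against widths. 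Second, even so, you must exhibit parameters (thinness $\beta$, position of the rotation centre, rotation angles) for which these inequalities hold simultaneously for all $j\neq k$ and for which the intersection really is a $3n$-gon with each $T_k$ contributing three nonempty arcs; these two requirements pull in opposite directions (the arcs must be long enough to be nonempty but short enough that their images stay inside all the other triangles), and until that system of inequalities is checked the existence statement for all $n\ge 2$ is not established.
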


It is much harder to construct examples of non-ergodic quadrilaterals and pentagons.
%\deleted{, the existence of non-ergodic examples is much harder to decide.}
%We have used computational tools 
%to explore the dynamics of quadrilateral slap maps 
%and based on such numerical evidence  we came to conjecture that
%every quadrilateral is ergodic.
%This turned out to be wrong, and 
In the rest of this section, we explain how to construct non-ergodic quadrilaterals. The basic idea behind our construction is to generate a kind of bifurcation for periodic orbits of the slap map implying the implosion, and hence the localization, of an ergodic component.

\begin{defn}
The orbit of a vertex $c$ of $P \in \PP_{d} $ is called a {\em doubling orbit} of type $(k,m)$ with $ k \ge 1 $ and $ m \ge 1 $ if 
	\begin{enumerate}
	\item $\psi_P^k(c^-)=\psi_P^k(c^+)=p$,
	\item $\psi_P^m(p)=c$,
	\item $\psi_P^i(c^\pm)$ is not a vertex for $i=1,\ldots, k+m-1 $,
	\end{enumerate}
where $\psi_P^k(c^\pm) := \lim_{x\to c^\pm} \psi_P^k(x)$.
\end{defn}

The {\em itinerary} of this doubling orbit is a triple $(\gamma_+,\gamma_-,\eta)$
where $\gamma_\pm$ are itineraries of $\{c^\pm, \psi_P(c^ \pm),\ldots, \psi_P^k(c^\pm)=p\}$,
and $\eta$ is the itinerary of $\{ p, \psi_P(p),\ldots, \psi_P^ m(p)\}$.

\begin{defn}
	Given a finite itinerary  $\gamma$, let $\psi_P^\gamma$ denote the composition of the branches of $\psi_P$ along $\gamma$.
\end{defn}

\begin{defn}\label{Pi:def}
Consider a doubling orbit $\mathscr{O}$ starting at a vertex $c$ with itinerary $(\gamma_+,\gamma_-,\eta)$. Let us denote by $c_{P'}$ the continuation of the vertex $c$ for a nearby polygon $P'$. We say that $\mathscr{O}$ is {\em generic} if and only if the map $\Pi \colon\mathcal{U} \to \Rr^2 $ defined in a neighborhood $\mathcal{U}$ of $P$ by $$\Pi(P')=\left(\,\psi_{P'}^{\gamma_+}(c_{P'})-(\psi_{P'}^{\eta})^{-1}(c_{P'}), \psi_{P'}^{\gamma_-}(c_{P'})-(\psi_{P'}^{\eta})^{-1}(c_{P'})\, \right)$$ has derivative $D\Pi_P$ of rank $2$ (i.e., maximal rank).	
\end{defn}

\begin{proposition}
Given polygon $P\in\mathcal{P}_d$ with a generic doubling orbit $\mathscr{O}$ of type $(k,m)$, there exists a codimension $2$ submanifold $\Sigma_{P} \subset \mathcal{P}_d$ passing through $P$ consisting of polygons with a doubling orbit of type $(k,m)$ that is a continuation of $\mathscr{O}$.
\end{proposition}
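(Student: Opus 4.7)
The plan is to realize $\Sigma_P$ as the zero level set $\Pi^{-1}(0,0)$ and then invoke the submersion theorem. The first step is to argue that, by condition (3) in the definition of doubling orbit, the first $k+m-1$ iterates of $c^{\pm}$ avoid vertices, so each of the (left/right) branches used to define the compositions $\psi_P^{\gamma_\pm}$ and $\psi_P^{\eta}$ is realized by hitting the interior of a specific side. The hit point on a side depends smoothly on the polygon's vertex coordinates; since the conditions ``interior of a side'' and ``$(\psi_{P'}^{\eta})^{-1}$ is well-defined near $c_{P'}$'' are open, there is a neighborhood $\mathcal{U}$ of $P$ on which the continuation $c_{P'}$ and all the branch compositions $\psi_{P'}^{\gamma_\pm}$, $(\psi_{P'}^{\eta})^{-1}$ are smooth in $P'$. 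Hence $\Pi\colon\mathcal{U}\to\Rr^2$ is smooth, and $\Pi(P)=(0,0)$ because of properties (1)--(2) of the doubling orbit $\mathscr{O}$.

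The second step is to identify $\Pi^{-1}(0,0)$ with the set of polygons in $\mathcal{U}$ carrying a continuation of $\mathscr{O}$. Unpacking, $\Pi(P')=(0,0)$ means
\[
\psi_{P'}^{\gamma_+}(c_{P'})\;=\;(\psi_{P'}^{\eta})^{-1}(c_{P'})\;=\;\psi_{P'}^{\gamma_-}(c_{P'}).
\]
Call this common point $p'$. The first and third equalities say $\psi_{P'}^k(c_{P'}^-)=\psi_{P'}^k(c_{P'}^+)=p'$, i.e.\ property (1) holds at $c_{P'}$; the middle equality says $\psi_{P'}^m(p')=c_{P'}$, i.e.\ property (2) holds. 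Property (3) holds automatically on $\mathcal{U}$ after shrinking if necessary, since it is an open condition. So $\Pi^{-1}(0,0)\cap\mathcal{U}$ consists precisely of polygons with a doubling orbit of type $(k,m)$ continuing $\mathscr{O}$.

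The third step is the submersion theorem: by genericity of $\mathscr{O}$, $D\Pi_P$ has rank $2$, so $(0,0)$ is a regular value of $\Pi$ at $P$, and hence $\Sigma_P:=\Pi^{-1}(0,0)\cap\mathcal{U}$ is a smooth submanifold of $\mathcal{P}_d$ of codimension $2$ containing $P$, completing the proof.

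The only point requiring real care is the smoothness of $\Pi$ (step one): one must check that each branch of $\psi_{P'}$ appearing in $\gamma_\pm$ and $\eta$ varies smoothly with $P'$ and that the one‑sided iterates $\psi_{P'}^k(c_{P'}^\pm)$ are smooth in $P'$ despite $c_{P'}$ being a discontinuity of $\psi_{P'}$. This reduces, by explicit formulas for the slap map, to observing that the orthogonal projection of a vertex of $P'$ onto a (smoothly varying) side of $P'$ is itself smooth in the vertices of $P'$, and that compositions of such projections remain smooth as long as the orbit stays in the interior of the sides used, which is exactly condition (3) and an openness argument. Once this is in place, everything else is a direct application of the submersion theorem.
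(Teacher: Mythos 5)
Your proposal is correct and follows essentially the same route as the paper: the paper's proof simply defines $\Sigma_P=\Pi^{-1}(0,0)$ and invokes genericity to get a codimension $2$ submanifold, exactly your strategy. Your additional care about the smoothness of $\Pi$ and the identification of $\Pi^{-1}(0,0)$ with polygons carrying a continuation of $\mathscr{O}$ fills in details the paper leaves implicit, but does not constitute a different argument.
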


\begin{proof}
Define $ \Sigma_{P}=\{\, P'\in \mathcal{U}\,:\,  \Pi(P')=(0,0)\,\}$.
$\Sigma_{P}$ is a  codimension $2$ submanifold because
$\mathscr{O}$ is generic.
\end{proof}

\begin{proposition}\label{generic:fork:bif:thm}
Given a generic doubling orbit $\mathscr{O}$ of type $(k,m)$ for a polygon $P\in\PP_d$, and a surface $S\subset \PP_d$ transversal to $\Sigma_{P} $ at $P$, there is a neighborhood $\,\mathcal{U}$ of $P$ and cone $\Gamma\subset S$ with apex $P$ such that for every $P'\in \Gamma\cap \,\mathcal{U}$, the ergodic component of $\psi_{P'}$ containing the vertex in $\mathscr{O}$ is a periodic attractor $\Lambda$ of period $k+m$. If $k+m$ is even, then $\Lambda$ is  bounded between two periodic orbits of the same period. If $k+m$ is odd, then $\Lambda$ is bounded by a single periodic orbit of  period $2(k+m)$.
\end{proposition}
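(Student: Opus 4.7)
The plan is to analyze $F_{P'}=\psi_{P'}^{k+m}$ in a small neighborhood of the vertex $c_{P'}$ and recognize it there as a full-branch expanding Lorenz map whose invariant interval forms the core of the attractor $\Lambda$. At $P\in\Sigma_{P}$ one has $F_{P}(c^{\pm})=c$; for a general $P'\in\mathcal{U}$, combining the identity $F_{P'}(c_{P'}^{\pm})=\psi_{P'}^{\eta}\bigl(\psi_{P'}^{\gamma_{\pm}}(c_{P'})\bigr)$ with the chain rule gives
\begin{equation*}
F_{P'}(c_{P'}^{\pm}) - c_{P'} \;=\; \lambda_{\eta}\,A_{\pm}(P') \;+\; O\!\left(|A_{\pm}(P')|^{2}\right),
\end{equation*}
where $(A_{+},A_{-}):=\Pi$ and $\lambda_{\eta}$ is the constant nonzero slope of the affine composition $\psi_{P'}^{\eta}$. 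By the genericity of $\mathscr{O}$ and the transversality of $S$ to $\Sigma_{P}$, the restriction $(A_{+},A_{-})|_{S}$ is a local diffeomorphism at $P$ onto a neighborhood of $0\in\Rr^{2}$.

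Denote by $\mu_{\pm}$ the constant slopes of the two affine branches of $F_{P'}$ at $c_{P'}^{\pm}$; piecewise expansion gives $|\mu_{\pm}|>1$. I define $\Gamma$ as the open sector in $S$, in $(A_{+},A_{-})$-coordinates, where the signs of $\lambda_{\eta}A_{\pm}$ are such that $F_{P'}$ admits a ``fold-back'' interval $J_{P'}=[a_{P'},b_{P'}]\ni c_{P'}$, with $a_{P'},b_{P'}\to c$ as $P'\to P$, satisfying $F_{P'}(J_{P'})=J_{P'}$, $F_{P'}(c_{P'}^{-})=b_{P'}$, and $F_{P'}(c_{P'}^{+})=a_{P'}$. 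Depending on $\sgn(\mu_{+}\mu_{-})$, the endpoints are constructed explicitly: as the two fixed points of the two affine branches of $F_{P'}$ if $\mu_{+}\mu_{-}>0$, and as a single period-$2$ orbit of $F_{P'}$ straddling $c_{P'}$ if $\mu_{+}\mu_{-}<0$. Solving the pertinent affine equations shows that these points exist precisely when $(A_{+},A_{-})$ lies in the appropriate quadrant, which is $\Gamma$; the local diffeomorphism property makes $\Gamma$ a genuine open cone with apex $P$, and by construction $F_{P'}|_{J_{P'}}$ is a full-branch expanding Lorenz map.

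Theorem~\ref{th:GS} then yields a unique mixing acip for $F_{P'}|_{J_{P'}}$ supported on $J_{P'}$, after ensuring primality of this Lorenz map (an open condition on $P'$ that can be guaranteed by shrinking $\mathcal{U}$). Pushing forward under $\psi_{P'}$ and using condition~(3) in the definition of a doubling orbit together with continuity to check disjointness, the images $\psi_{P'}^{i}(J_{P'})$ for $i=0,\dots,k+m-1$ are pairwise disjoint intervals cyclically permuted by $\psi_{P'}$, and their union
\begin{equation*}
\Lambda \;=\; \bigcup_{i=0}^{k+m-1} \psi_{P'}^{i}(J_{P'})
\end{equation*}
is the ergodic component of $\psi_{P'}$ containing $c_{P'}$, and hence a periodic attractor of period $k+m$. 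The boundary of $\Lambda$ consists of the $\psi_{P'}$-orbits of $\{a_{P'},b_{P'}\}$: two distinct orbits of period $k+m$ in the case $\mu_{+}\mu_{-}>0$, and a single orbit of period $2(k+m)$ in the case $\mu_{+}\mu_{-}<0$. The main obstacle is identifying $\sgn(\mu_{+}\mu_{-})$ with the parity of $k+m$, which requires a careful bookkeeping of the orientations of the affine branches of $\psi_{P'}$ traversed along the itineraries $\gamma_{\pm}$ and $\eta$; each slap-map branch contributes a sign change to the derivative (the orthogonal projection, composed with the arc-length parametrization, locally reverses orientation), so the product $\mu_{+}\mu_{-}$ picks up the factor $(-1)^{k+m}$, yielding the parity dichotomy stated in the proposition.
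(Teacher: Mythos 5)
Your proposal follows essentially the same route as the paper's proof: localize the return map $\psi_{P'}^{k+m}$ along the doubling orbit, use the fact that each slap branch reverses orientation so that the parity of $k+m$ decides whether the two local affine branches are increasing or decreasing, cut out the cone $\Gamma$ by sign conditions on $\Pi_\pm$, and bound the resulting invariant interval by the two branch fixed points in the even case and by a single period-two orbit straddling the discontinuity in the odd case (the paper carries this out at $x(P')=(\psi_{P'}^{\eta})^{-1}(c_{P'})$ rather than at $c_{P'}$, an immaterial difference, and does not bother with primality since the statement only asserts the bounding periodic orbits). The one point to fix in your write-up is that your interval $J_{P'}$ is over-determined: generically you cannot simultaneously have $F_{P'}(c_{P'}^{-})=b_{P'}$, $F_{P'}(c_{P'}^{+})=a_{P'}$ \emph{and} $a_{P'},b_{P'}$ equal to the fixed points of the two affine branches (that is a codimension-one coincidence, not an open condition on $P'$), so you should, as the paper does, take the invariant interval to be simply the one bounded by the two fixed points (respectively by the period-two orbit) and drop the full-branch normalization of the Lorenz map.
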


\begin{proof} Denote by $\Pi_+$ and $\Pi_-$
the  components of the map $\Pi:\mathscr{U}\to\Rr^2$
introduced in Definition~ \ref{Pi:def}.
We study separately the two cases: 1) $n+k$ even and 2) $n+k$ odd.

Case 1. Define the cone
$$\Gamma=\{\, P'\in S\,:\, \Pi_+(P')<0,\; \Pi_+(P')>0\,\}\;.$$
This cone is non-empty, because $\mathscr{O}$ is generic and the surface $S$
is transversal to $\Sigma$ at $P$.
Given $P'\in\Gamma$ consider the iterate of the slap map $T_{P'}:=\psi_{P'}^{m+k}$ restricted to a neighborhood of
the point $x(P'):=(\psi_{P'}^\eta)^{-1}(c_{P'})$.
The map $T_{P'}$ is piecewise increasing because $m+k$ is even.
At the bifurcation, $T_P$ is continuous at $x(P)$, while for
$P'\in\Gamma$ one has first kind discontinuities at $ x=x(P') $ (see Fig.~\ref{fork:bifurcation})
$$ \lim_{t\to x^+} T_{P'}(t) < T_{P'}(x) < \lim_{t\to x^-} T_{P'}(t).$$
\begin{figure}[t]
 \begin{center}
 \includegraphics*[scale=.3]{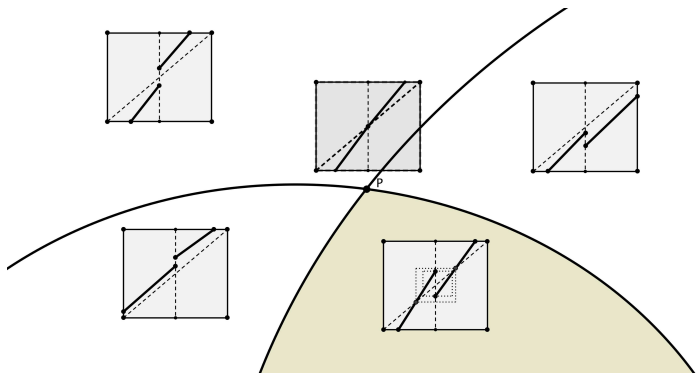} 
 \end{center}
 \caption{The parameter space around a doubling orbit bifurcation of even period.}
 \label{fork:bifurcation}
\end{figure}
Because the slopes of the branches of $T_{P'}$ are uniformly strictly greater than $1$,
if $P'$ is close to $P$ then the oscillation at the discontinuity point $x=x(P')$
is very small, and the map $T_{P'}$ has exactly two fixed points near $x$,
$t=x_-(P')$ and $t=x_+(P')$ such that $x_-(P')<x(P')<x_+(P')$.
Clearly  $[x_-(P'),x_+(P')]$ is invariant under $T_{P'}$,
and the restriction of $T_{P'}$ to this interval is a Lorenz map.
The fixed points $x_\pm(P')$ determine two periodic points of $\psi_{P'}$
with period $m+k$, and the invariant interval $[x_-(P'),x_+(P')]$ determines an ergodic component of the slap map  $\psi_{P'}$ which has mixing period $m+k$.

Case 2. Define the cone
$$\Gamma=\{\, P'\in S\,:\, \Pi_+(P')>0,\; \Pi_+(P')<0\,\}.$$
This cone is not empty for the same reason the cone $ \Gamma $ in Case 1 was not empty.
Given $P'\in\Gamma$ consider the iterate of the slap map $T_{P'}:=\psi_{P'}^{m+k}$ restricted to a neighborhood of
the point $x(P'):=(\psi_{P'}^\eta)^{-1}(c_{P'})$.
Now the map $T_{P'}$ is piecewise decreasing because $m+k$ is odd.
At the bifurcation, $T_P$ is continuous at $x(P)$, while $T_{P'}$  has a discontinuity of the first kind at $x=x(P')$,
$$ \lim_{t\to x^-} T_{P'}(t) < T_{P'}(x) < \lim_{t\to x^+} T_{P'}(t). $$
Because the slopes of the branches of $T_{P'}$ are uniformly strictly lesser than $-1$,
if $P'$ is close to $P$ then the oscillation at the discontinuity point $x=x(P')$
is very small, and the map $T_{P'}$ has exactly one periodic orbit of period $2$ near  $x$.
In other words there are
 $x_-=x_-(P')$ and $x_+=x_+(P')$ near $x$ such that 
$T_{P'}(x_-)= x_+$, $T_{P'}(x_+)=x_-$ and  $x_-<x<x_+$.
Clearly  $[x_-,x_+]$ is invariant under $T_{P'}$,
and the restriction of $T_{P'}^2$ to this interval is a Lorenz map.
The orbit $\{x_-,x_+\}$ determines a  periodic orbit of $\psi_{P'}$
with period $2(m+k)$, while the invariant interval $[x_-,x_+]$ determines an ergodic component of the slap map  $\psi_{P'}$ which has mixing period $2(m+k)$. 
\end{proof}

\subsection{Doubling orbits on kites}

In order to get two ergodic components we need two disjoint doubling orbits  simultaneously bifurcating. In this subsection, we explain how to obtain this phenomenon in kites, i.e. quadrilaterals with a mirror symmetry.

%A simple way to achieve this is to look for
%asymmetric doubling orbits on \textit{kites}, i.e. quadrilaterals with a mirror symmetry.

%Consider a family of quadrilaterals with a mirror symmetry
%along a unit diagonal. 
We consider a family of kites determined
by two angles $\alpha,\beta$ such that
$0<\beta<\frac{\pi}{4}<\alpha$ and $\alpha+\beta<\frac{\pi}{2}$.
They measure half of the quadrilateral angles bisected by the fixed unit diagonal.
We label the edges of a kite from $0$ to $3$
according to Fig.~\ref{fig:kites}(A).
Pairs of mirror symmetric edges have labels with the same parity.
%\begin{figure}[h]
% \begin{center}
% \includegraphics*[scale={.55}]{quads} 
% \end{center}
% \caption{(a) Family of kites. 
% (b) Symmetric doubling orbits.}
% \label{quadr:param}
%\end{figure}
\begin{figure}[h]
\subfloat[]{\includegraphics[scale=.55]{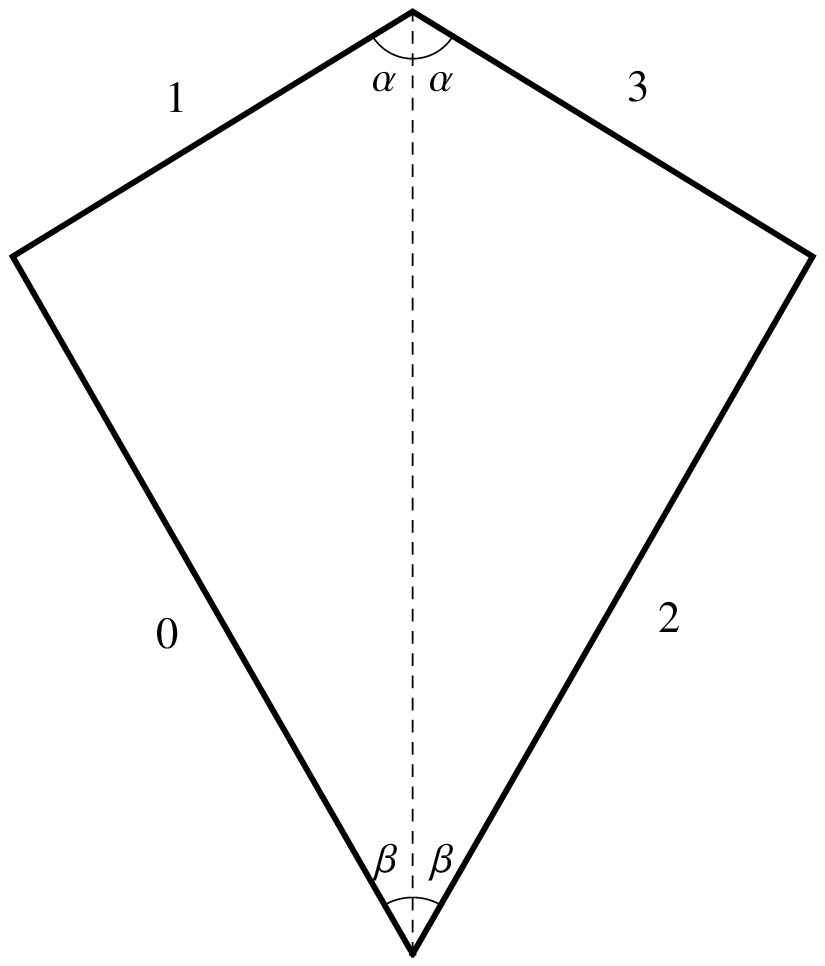}}
\hspace{1cm}
\subfloat[]{\includegraphics[scale=.55]{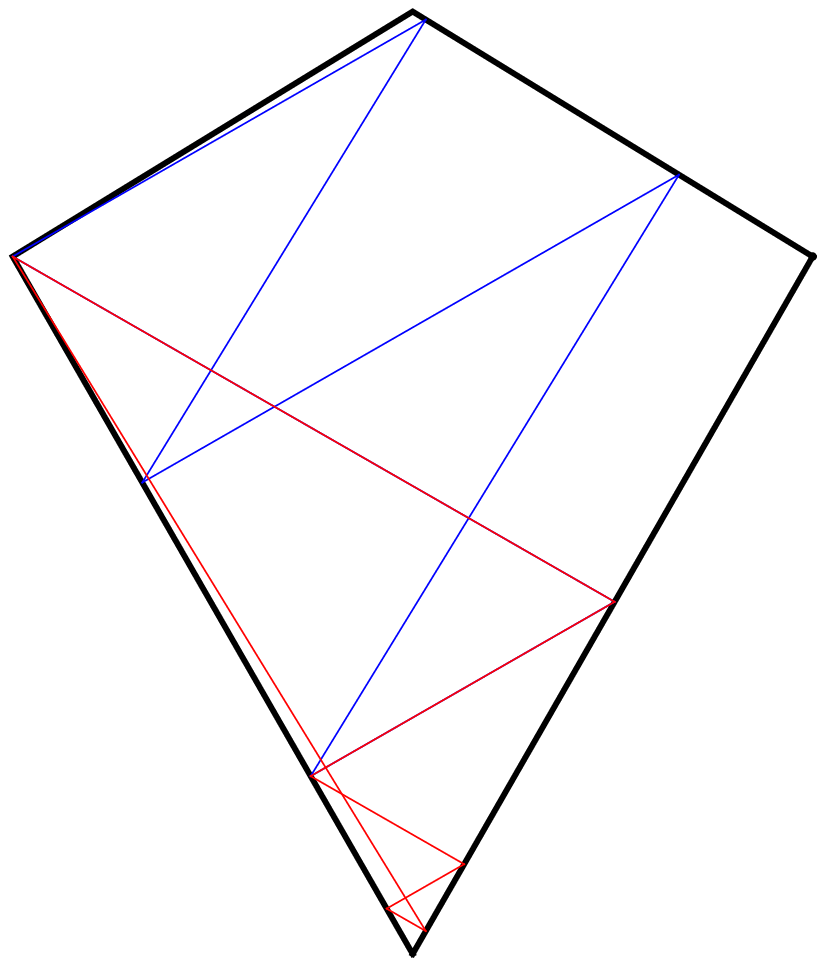}}
\caption{(A) Family of kites. (B) Asymmetric doubling orbit $\mathscr{O}$.}
\label{fig:kites}
\end{figure}

Let $K=K_{\alpha,\beta}$ be any member of this family.
The half-perimeter of $K$ is easily checked to be
$$\ell=\ell(\alpha,\beta):= \frac{\sin\alpha+\sin\beta}{\sin(\alpha+\beta)}\,.$$
By symmetry we can reduce
the slap map $\psi_K$ to the union of the edges labelled $0$ and $1$.
The reduced map
$\widetilde{\psi}_{K}:[0,\ell]\to [0,\ell]$ is the composition of the slap map $\psi_{K}$ with the reflexion around the given mirror.
The common vertex between the edges $0$ and $1$ has coordinate
$$c=c(\alpha,\beta):= \frac{\sin\alpha}{\sin(\alpha+\beta)}\,,$$ and
the orthogonal projection of the common vertex between the edges $2$ and $3$
onto the side $0$ has  coordinate
$$d=d(\alpha,\beta):= \frac{\cos(2\beta) \sin\alpha}{\sin(\alpha+\beta)}\,.$$

To simplify the notation we simply write $\psi$ for the reduced map $\widetilde{\psi}_{K}$. The reduced map $\psi$ has the following three branches:
\begin{enumerate}
\item[(i)] The branch that maps side $0$ to side $2$, which by reflexion is mapped to side $0$ again.
This branch is the one-to-one map 
$\psi_{00}:[0,d]\to [0,c]$ defined by
$$\psi_{00}(x)= \frac{x}{\cos(2\beta)}\;.$$

\item[(ii)] The branch that maps maps side $0$ to side $3$, which by reflexion is mapped to side $1$.
This branch is the one-to-one map 
$\psi_{01}:[d,c]\to [c,e]$ defined by
$$\psi_{01}(x)=  
\frac{x}{\cos(\alpha-\beta)} + \left( 1- \frac{\cos(2\beta)}{\cos(\alpha-\beta)}\right) \frac{\sin\alpha} {\sin(\alpha+\beta)}
\;,$$
where
$$e:=  \frac{ 1+\cos(\alpha-\beta)-\cos(2\beta) }{\cos(\alpha-\beta)}\frac{\sin\alpha} {\sin(\alpha+\beta)}\,.$$
 
\item[(iii)] Finally, the branch that maps side $1$ to side $2$, which by reflexion is mapped to side $0$.
This branch is the one-to-one map 
$\psi_{10}:[c,\ell]\to [p,q]$ defined by 
$$\psi_{10}(x)=  
\frac{x-c(\alpha,\beta)}{\cos(\alpha-\beta)} + \frac{\cos(\alpha+\beta)}{\cos(\alpha-\beta)}  \frac{\sin\alpha} {\sin(\alpha+\beta)}
\;,$$
where
$$
p:=  \frac{\cos(\alpha+\beta)\, \sin\alpha}{ \sin(\alpha+\beta)\,\cos(\alpha-\beta)}\quad\text{and}\quad
q:=  \frac{\cos \alpha }{\cos(\alpha-\beta)}\,.$$
\end{enumerate}

We say that an orbit of the slap map $\psi_K$ is \textit{asymmetric} if the corresponding trajectory on $K$ is asymmetric with respect to the vertical diagonal of $K$. We claim that there exists a kite $K$ such that the slap map $\psi_{K}$ has an asymmetric doubling orbit $\mathscr{O}$. We do not give an analytic proof of this claim. Instead, we provide strong numerical evidence of the existence of such orbits. We believe that using interval arithmetic our numerical computations can be transformed into a computer assisted proof.  

In the following, we are going to exhibit an asymmetric doubling orbit of type $(4,2)$ with itinerary 
$$
(\gamma_+,\gamma_-,\eta)=\left(\, (1,2,0,2,0),\, (0,3,0,3,0),\, (0,2,0) \,\right)\,.$$

Define  the $\Rr^2$-valued function 
$\Pi(\alpha,\beta):=(\Pi_+(\alpha,\beta),\Pi_-(\alpha,\beta))$, where the components $\Pi_\pm$ are given by
\begin{align*}
\Pi_+(\alpha,\beta)&:=  \psi_{00}^3\circ\psi_{10}(c)
- \psi_{00}^{-2}(c)\;,\\
  \Pi_-(\alpha,\beta)&:=   (\psi_{10}\circ \psi_{01})^2(c)
- \psi_{00}^{-2}(c)\;.
\end{align*}
We consider the  branch mappings $\psi_{00}$, $\psi_{01}$, $\psi_{10}$ and its inverses with the domains specified in items (i)-(iii) above.
The domain of the map $\Pi$, which we denote by $D_\Pi$, is the set of all $(\alpha,\beta)\in\Rr^2$
such that $0<\beta<\frac{\pi}{4}<\alpha$,\, $\alpha+\beta<\frac{\pi}{2}$, and
all compositions in the definition of $\Pi_\pm(\alpha,\beta)$ are meaningful, i.e.
\begin{align*}
&0<\psi_{00}^{-1}(c)<c\,,\quad 0<\psi_{00}^i\circ\psi_{10}(c)<d\,,\quad i=0,1,2\,,\\
&c<\psi_{01}(c)<\ell\,,\quad d<\psi_{10}\circ\psi_{01}(c)<c\,,\\
&c<\psi_{01}\circ\psi_{10}\circ\psi_{01}(c)<\ell\quad\text{and}\quad d<(\psi_{10}\circ\psi_{01})^2(c)<c\,.
\end{align*} 
Then any solution to the system of trigonometric polynomial equations
\begin{equation}\label{eq:P}
\Pi(\alpha,\beta)=(0,0)\,,\quad(\alpha,\beta)\in D_\Pi
\end{equation}
corresponds to a kite  whose slap map
possesses a doubling orbit $\mathscr{O}$  with the prescribed type and itinerary.

Using a computer algebra system to implement  Newton's Method one can check
that this problem has indeed a solution $(\alpha_0,\beta_0)$ whose
entries are approximately $$\alpha_0=1.021264\ldots\quad\text{and}\quad\beta_0=0.520719\ldots\,. $$
We can also verify that 
$\det D\Pi_{(\alpha_0,\beta_0)} = -24.321933 \ldots$ is non-zero. Hence, for the kite $K_0$ associated with the pair $(\alpha_0,\beta_0)$, the doubling orbit $\mathscr{O}$
is generic (see Fig.~\ref{fig:kites}(B)).
% 
%
%\begin{proposition}\label{prop:doublingorbit}
%There exists a kite $K$ such that the slap map $\psi_{K}$ has an asymmetric doubling orbit $\mathscr{O}$.
%\end{proposition}
%
%
%The reader can find the proof of this proposition in the Appendix. 
Denoting by $\mathscr{O}'$ the mirror image of $\mathscr{O}$, $\mathscr{O}'$ is another
generic doubling orbit with the given type and symmetric itinerary. 
It follows that the orbits $\mathscr{O}$ and $\mathscr{O}'$ are disjoint.
Thus, by Proposition~\ref{generic:fork:bif:thm}, there are arbitrarily small perturbations of $K_0$,
within the given family of kites, for which the slap maps have  ergodic
components  inside disjoint neighborhoods of the periodic orbits $\mathscr{O}$ and $\mathscr{O}'$. This shows that there are kites
whose slap map has at least two ergodic components. 
%
%\begin{theorem}
%There are kites
%whose slap map has at least two ergodic components.
%\end{theorem}

%
%This result together with Theorem~\ref{thm:stab erg} yields the following,
%
%\begin{corollary}
%There are quadrilaterals $Q$ whose billiard map $\Phi_{f,Q}$ has at least two SRB measures for sufficiently small $\lambda(f)$.
%\end{corollary}
%
%\begin{proof}
%...
%\end{proof}

%\appendix
%\input{appendix.tex}

%\newpage
%%%%%%%%%%%%%%%%%%%%%%%%%%%%%%%%%%%%%%
\section*{Acknowledgements}
The authors were supported by Funda\c c\~ao para a Ci\^encia e a Tecnologia through the Program POCI 2010 and the Project ``Randomness in Deterministic Dynamical Systems and Applications'' (PTDC-MAT-105448-2008).

\end{document}